\documentclass[10pt, a4paper, twoside]{amsart}
\usepackage{amsthm}
\usepackage{amsmath}
\usepackage{amssymb}
\usepackage{mathtools}
\usepackage[all]{xy}
\usepackage{indentfirst}
\usepackage{comment}

\newtheorem{thm}{\bf Theorem}[section]
\newtheorem{prop}[thm]{\bf Proposition}
\newtheorem{lem}[thm]{\bf Lemma}

\newtheorem*{thm*}{\bf Theorem}
\newtheorem*{cor*}{\bf Corollary}

\theoremstyle{definition}

\newtheorem{rem}[thm]{\it Remark}

\newtheorem*{df*}{\bf Definition}
\newtheorem*{not*}{\bf Notation}
\newtheorem*{ack*}{\bf Acknowledgements}
\newtheorem*{dfrem*}{\bf Definition and Remark}

\newtheorem*{nota*}{\bf Notation}

\newtheorem*{remone*}{\it Remark}
\newtheorem*{remtwo*}{\it Remark}
\newtheorem*{remthree*}{\it Remark}
\newtheorem*{remfour*}{\it Remark}

\def\P{\mathbb{P}}
\def\C{\mathbb{C}}

\def\Q{\mathbb{Q}}

\def\Z{\mathbb{Z}}
\def\R{\mathbb{R}}

\DeclareMathOperator{\Supp}{Supp}
\DeclareMathOperator{\Aut}{Aut}
\DeclareMathOperator{\Bir}{Bir}

\DeclareMathOperator{\Sing}{Sing}

\DeclareMathOperator{\Pic}{Pic}

\DeclareMathOperator{\Sec}{Sec}

\DeclareMathOperator{\Tan}{Tan}

\setcounter{section}{0}

\makeindex

\subjclass[2010]{14E08, 32Q20, 14J45}
\keywords{Birational superrigidity, K-stability}
\title[\tiny Birational superrigidity and K-stability]
{Birational superrigidity and K-stability of projectively normal Fano manifolds of index one}
\author{Fumiaki Suzuki}
\address{Department of Mathematics, Statistics, and Computer Science, University of Illinois at Chicago}
\email{fsuzuk2@uic.edu}

\begin{document}
\maketitle

\begin{abstract}
We prove that every projectively normal Fano manifold in $\P^{n+r}$ of index $1$, codimension $r$ and dimension $n\geq 10r$ is birationally superrigid and K-stable.
This result was previously proved by Zhuang under the complete intersection assumption.
\end{abstract}

\section{Introduction} 
Birational superrigidity and K-stability of Fano manifolds are two important notions with different backgrounds. 
The notion of birational superrigidity is motivated by the rationality problem of Fano manifolds. 
A Fano manifold $X$ with the Picard number $1$ is called {\it birationally superrigid}
if any birational map from $X$ to the source of another Mori fiber space is an isomorphism.
It implies that $X$ is non-rational and $\Bir(X)=\Aut(X)$.
On the other hand, the notion of K-stability is motivated by the existence of K\"ahler-Einstein metric on Fano manifolds.
A Fano manifold $X$ is called {\it K-stable} if the Donaldson-Futaki invariant is positive for any non-trivial normal test configuration.
It is stronger than the K-polystability which is equivalent to the existence of K\"ahler-Einstein metric \cite{CDS1, CDS2, CDS3, T}.
Birational superrigidity and K-stability are unexpectedtly related according to Odaka-Okada and Stibitz-Zhuang \cite{OO, SZ},
and it is conjectured by Kim-Okada-Won \cite{KOW} that every birationally superrigid Fano manifold is K-stable.
Both the notions are intensively studied in the case of smooth Fano complete intersections of index $1$:
birational superrigidity by Iskovskih-Manin, Pukhlikov, Cheltsov, de Fernex-Ein-Musta\c t\u a, de Fernex, Suzuki, and Zhuang \cite{IM, P1, P2, P3, P5, P6, C1, dFEM2, dF1, dF2, S, Z} 
(see also the note \cite{K2} written by Koll\'ar),
and K-stability by Fujita, Stibitz-Zhuang, and Zhuang \cite{Fuj, SZ, Z}.
Among them, Zhuang \cite{Z} proves that every smooth Fano complete intersection of index $1$ and small codimension is birationally superrigid and K-stable.

In this paper, we replace the complete intersection assumption of the main theorem of \cite{Z} by the projective normality:

\begin{thm}\label{t1}
Every projectively normal Fano manifold in $\P^{n+r}$ of index $1$, codimension $r$, and dimension $n\geq 10r$
is birationally superrigid and K-stable.
\end{thm}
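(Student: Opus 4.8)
The plan is to follow Zhuang's proof of the complete intersection case, isolating the (few) steps that genuinely use the complete intersection structure and replacing each by an argument requiring only projective normality. Write $H$ for the hyperplane class, so that $-K_X=\mathcal O_X(1)=\mathcal O_{\P^{n+r}}(1)|_X$ by the index one hypothesis, and set $d=\deg X=H^{n}$. Both conclusions, birational superrigidity and K-stability, will be extracted from a single family of \emph{multiplicity estimates}: upper bounds for $\operatorname{mult}_Z\mathcal M$ when $\mathcal M\subseteq|mH|$ is a movable linear system and $Z\subseteq X$ a subvariety, and companion upper bounds for the volumes $\operatorname{vol}(H-tE)$ attached to divisorial valuations $\operatorname{ord}_E$ over $X$. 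The real work is to prove these estimates in the absence of defining equations. Two elementary remarks set the stage. First, such an $X$ is automatically arithmetically Cohen--Macaulay: the vanishings $H^{j}(X,\mathcal O_X(k))=0$ for $1\le j\le n-1$ and all $k$ follow from Kodaira vanishing and Serre duality using $-K_X=\mathcal O_X(1)$, and combined with projective normality these amount to Cohen--Macaulayness of the homogeneous coordinate ring. Consequently the class of arithmetically Cohen--Macaulay, irreducible, nondegenerate subvarieties of $\P^{n+r}$ of codimension $r$ --- which contains all our $X$ --- is closed under general linear sections, with the degree $d$ preserved.

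For birational superrigidity I would use the Noether--Fano method in the form of Corti, Pukhlikov and de Fernex: $X$ is birationally superrigid provided that for every $m>0$ and every movable $\mathcal M\subseteq|mH|$ the pair $\bigl(X,\tfrac1m\mathcal M\bigr)$ is canonical. Suppose not, and let $Z\subsetneq X$ be a maximal non-canonical centre. Pukhlikov's $4n^{2}$-type inequality and its refinements by Cheltsov and de Fernex give, for two general members $D_1,D_2\in\mathcal M$, a lower bound $\operatorname{mult}_Z(D_1\cdot D_2)\gg m^{2}$, while the connectedness theorem for non-klt centres confines $Z$ to a bounded-codimension range. For the matching upper bound I would run Zhuang's argument with Pukhlikov's hypertangent divisors (equivalently, de Fernex's generic linear sections). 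At a point $x\in Z$ and for each relevant $k$ one uses the linear system $\Lambda_k=H^{0}\!\bigl(X,\mathcal O_X(k)\otimes\mathfrak m_x^{\,k+1}\bigr)$ of degree-$k$ divisors vanishing to order $\ge k+1$ at $x$; here projective normality does the work of the defining equations, because $H^{0}(X,\mathcal O_X(k))=H^{0}(\P^{n+r},\mathcal O(k))$ surjects onto $\mathcal O_{\P^{n+r},x}/\mathfrak m_x^{\,k+1}$ and hence onto $\mathcal O_{X,x}/\mathfrak m_x^{\,k+1}$, so $\Lambda_k$ has the expected dimension $\binom{n+r+k}{n+r}-\binom{n+k}{n}$ and is nonempty for $k$ not too large. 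One then bounds the codimension of the base locus of the $\Lambda_k$ by an intersection-theoretic count using the degree $d$ and the linear-section stability above, and chains the hypertangent systems against the cycle $D_1\cdot D_2$ as in Pukhlikov--Zhuang; when $n\ge 10r$ the resulting upper bound contradicts the lower bound. When $Z$ has positive dimension one first slices by a general linear subspace through a general point of $Z$, preserving irreducibility, the relevant smoothness, projective normality and the degree, and reduces to the point case.

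For K-stability I would use the valuative criterion of Fujita and Li: $X$ is K-stable once $A_X(E)>S_X(E)$ for every prime divisor $E$ over $X$, where $S_X(E)=\tfrac1d\int_0^{\infty}\operatorname{vol}(H-tE)\,dt$. As in Zhuang's argument, the hypertangent-divisor estimates of the previous paragraph, applied to the valuation $\operatorname{ord}_E$, bound $\operatorname{vol}(H-tE)$ from above and force $S_X(E)<A_X(E)$ under the same hypothesis $n\ge 10r$; alternatively one first establishes a lower bound on the global log canonical threshold $\alpha(X)$ from the same Pukhlikov--Cheltsov lct estimates and then invokes the theorem of Stibitz--Zhuang that a birationally superrigid Fano manifold satisfying such a bound is K-stable. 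Either route uses nothing about $X$ beyond the multiplicity and lct estimates together with formal properties of $S_X$, so it transplants verbatim to the projectively normal setting.

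I expect the main obstacle to be exactly the upper bound on $\operatorname{mult}_Z(D_1\cdot D_2)$ without defining equations. For a complete intersection the base loci of the hypertangent systems are cut out explicitly by the low-degree Taylor parts of the $r$ defining polynomials, and their codimensions are immediate; for a merely projectively normal $X$ one must control these base loci using only the dimension count for $\Lambda_k$, the degree $d$, and the linear-section stability, and the estimate must be uniform enough to handle every admissible codimension of the centre $Z$. Arranging that the lower and upper bounds overlap, and doing so tightly enough to reach the clean numerical threshold $n\ge 10r$ rather than a weaker one, is the delicate part, and it is precisely here that the hypothesis forcing $n$ large compared with $r$ is consumed.
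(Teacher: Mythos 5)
Your skeleton (Noether--Fano reduction to canonicity of $\bigl(X,\tfrac1m\mathcal M\bigr)$, multiplicity estimates for the cycle $D_1\cdot D_2$, linear-section stability of normality via Kodaira vanishing, and the Stibitz--Zhuang alpha-invariant route to K-stability) agrees with the paper's. The gap is in the one step you yourself single out as the main obstacle: the multiplicity upper bound without defining equations. You propose to obtain it from hypertangent systems $\Lambda_k=H^0\bigl(X,\mathcal O_X(k)\otimes\mathfrak m_x^{k+1}\bigr)$ and assert that their base loci can be controlled ``by an intersection-theoretic count using the degree $d$ and the linear-section stability,'' but no such argument is given, and none is apparent. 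Projective normality gives nonemptiness and the expected dimension of $\Lambda_k$, but it says nothing about where the base locus sits; for complete intersections the base-locus control comes from the explicit Taylor truncations of the $r$ defining forms together with regularity conditions, and that is exactly the input you no longer have. As written, the core estimate of the proof is missing. A secondary misplacement: projective normality is not what powers the multiplicity bounds at all. In the paper it enters only once (and only $2r$-normality is used), namely to bound $h^0(Y,\mathcal O_Y(2r))\le h^0(\P^{n-r+1},\mathcal O(2r))$ for a general linear section $Y$ of codimension $2r-1$, which is the hypothesis needed to apply Zhuang's klt criterion \cite[Corollary 1.8]{Z} in Step 2.

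The paper closes the gap by a different technique that avoids hypertangent divisors entirely: it generalizes the Pukhlikov--Cheltsov--Suzuki multiplicity bound (Proposition \ref{p1}: an effective cycle of codimension $s$ numerically equivalent to $m\cdot c_1(\mathcal O_X(1))^s\cap[X]$ satisfies $e_x\le m$ away from a closed subset of dimension $<rs$) by moving cycles through residual intersections of cones over subvarieties from general external points, using Fulton's residual intersection theorem (with a blow-up of $\P^{N+1}$ to repair the failure of $Z\subset C$ to be Cartier). The only geometric input replacing the complete intersection structure is the condition $(\star)$ --- every closed subset of dimension $\ge r$ meets the ramification locus $R(\pi_p)$ of the projection from any external point --- and this is verified by purely topological means (Barth--Larsen, Fulton--Hansen connectedness, Zak's theorem on linear normality), requiring only $n\ge 3r-2$ and no normality hypothesis. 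If you want to salvage your outline, the fix is to substitute this cone/residual-intersection argument for the hypertangent step; the rest of your plan then goes through essentially as in Zhuang.
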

 
 \begin{rem}
So far we do not have any non-complete-intersection examples which satisfy the assumption of Theorem \ref{t1}.
In fact, the Hartshorne conjecture predicts that every smooth projective variety in $\P^{n+r}$ of codimension $r$ and dimension $n>2r$ is a complete intersection, while the conjecture is widely open.
\end{rem}

A key step involves a generalization of multiplicity bounds for cycles on smooth complete intersections
due to Pukhlikov \cite[Proposition 5]{P4}, Cheltsov \cite[Lemma 13]{C3}, and the author \cite[Proposition 2.1]{S}.

This paper is organized as follows.
In Section $2$, we study the ramification locus of the linear projection from a closed point.
In Section $3$, we prove multiplicity bounds for cycles.
In Section $4$, we prove a stronger version of Theorem \ref{t1}.
In Section $5$, we discuss the singular case.

Throughout this paper, the base field is the field of complex numbers $\C$.

\begin{not*}
Let $X$ be a complete algebraic scheme.
\begin{itemize}
\item We denote by $[X]$ the fundamental cycle of $X$.
\item
For a closed subvariety $Z\subset X$, let $e_{Z}(X)$ be the Samuel multiplicity of $X$ along $Z$.
We extend the definition of Samuel multiplicities to arbitrary cycles by linearlity.
\item
For pure-dimensional cycles $\alpha, \beta$ on $X$ intersecting properly and an irreducible componenet $Z$ of the intersection,
we denote by $i(Z,\alpha\cdot \beta; X)$ the intersection multiplicity of $Z$ in $\alpha \cdot \beta$
whenever the intersection product $\alpha\cdot \beta$ is defined at $Z$.
\item
For a closed subscheme $Z\subset X$, we denote by $s(Z, X)$ be the Segre class of $Z$ in $X$.
\item
For a vector bundle $E$ on $X$, we denote by $c(E)$ the total Chern class of $E$, and by $c_{i}(E)$ the $i$-th Chern class of $E$.
\item 
We denote by $Z_{i}(X)$ the group of $i$-cycles on $X$,
and by $CH_{i}(X)$ (resp. $N_{i}(X)$) the group of those modulo the rational equivalence (resp. the numerical equivalence).
When $X$ is smooth, we denote by $N^{j}(X)$ the group of $j$-cocycles on $X$ modulo the numerical equivalence.
\end{itemize}
For the definitions, we refer the reader to \cite{F}.
\end{not*}

\begin{ack*}
The author wishes to thank his advisor Lawrence Ein for constant support and warm encouragement.
He is grateful to Ziquan Zhuang for helpful comments.
Finally, he thanks the anonymous referee for valuable suggestions which improve the exposition of this paper.
\end{ack*}

\section{The ramification locus of the linear projection from a closed point}
Let $X\subset \P^{n+r}$ be a non-degenerate smooth projective variety of dimension $n$ and codimension $r$.
We take a closed point $p\in \P^{n+r}$ not contained in $X$.
The choice of $p$ determines a section $s\in \Gamma(X,N_{X/\P^{n+r}}(-1))$.
Let $\pi_{p}\colon X\rightarrow \P^{n+r-1}$ be the restriction of the linear projection from $p$.
We define the ramification locus $R(\pi_{p})\subset X$ of $\pi_{p}$ as the zero-scheme of the section $s$
\cite[Example 14.4.15]{F}.
We consider the following condition $(\star)$:
given any closed point $p\in \P^{n+r}$ not contained in $X$,
we have $Z\cap R(\pi_{p})\neq \emptyset$ for any closed subset $Z\subset X$ of dimension $\geq r$.

\begin{lem}\label{l1}
Assume that $X$ is a complete intersection.
Then the condition $(\star)$ is satisfied.
\end{lem}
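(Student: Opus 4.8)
The plan is to exploit the complete intersection structure of $X$ to express $R(\pi_{p})$ as the common zero locus of $r$ sections of ample line bundles on $X$, and then to run a dimension count whose threshold is exactly $r$.

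First I would fix notation: write $X=H_{1}\cap\dots\cap H_{r}\subset\P^{n+r}$ as a complete intersection of hypersurfaces $H_{i}$ of degree $d_{i}$. Since $X$ is non-degenerate, no $H_{i}$ is a hyperplane, so $d_{i}\geq 2$ for every $i$. The normal bundle then splits as $N_{X/\P^{n+r}}\cong\bigoplus_{i=1}^{r}\mathcal{O}_{X}(d_{i})$, hence $N_{X/\P^{n+r}}(-1)\cong\bigoplus_{i=1}^{r}\mathcal{O}_{X}(d_{i}-1)$. Under this identification the section $s$ determined by $p$ corresponds to a tuple $(s_{1},\dots,s_{r})$ with $s_{i}\in\Gamma(X,\mathcal{O}_{X}(d_{i}-1))$, so that $R(\pi_{p})$, being the zero-scheme of $s$, equals $V(s_{1})\cap\dots\cap V(s_{r})$, where $V(s_{i})\subseteq X$ denotes the zero-scheme of $s_{i}$. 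The hypothesis enters precisely here: since $d_{i}-1\geq 1$, each $\mathcal{O}_{X}(d_{i}-1)$ is ample. (Equivalently, as a complete intersection $X$ is projectively normal, so each $s_{i}$ extends to a form of degree $d_{i}-1$ on $\P^{n+r}$ and $V(s_{i})$ is either $X$ itself or a genuine hypersurface section of $X$.)

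Next I would isolate an elementary fact: if $W\subseteq X$ is irreducible and closed with $\dim W\geq 1$ and $t$ is a global section of an ample line bundle on $X$, then $W\cap V(t)\neq\emptyset$ and $\dim(W\cap V(t))\geq\dim W-1$. Indeed, if $t$ vanishes identically on $W$ then $W\cap V(t)=W$; otherwise $W\cap V(t)$ is the zero locus of a nonzero section of an ample line bundle on the positive-dimensional projective variety $W$, hence nonempty (an ample line bundle on such a variety is nontrivial, so has no nowhere-vanishing section), and its codimension in $W$ is at most $1$ by Krull's principal ideal theorem. With this in hand I would conclude by induction: given a closed $Z\subseteq X$ with $\dim Z\geq r$, choose an irreducible component $W_{0}\subseteq Z$ with $\dim W_{0}\geq r$; and whenever an irreducible closed $W_{k}\subseteq W_{0}\cap V(s_{1})\cap\dots\cap V(s_{k})$ with $\dim W_{k}\geq r-k$ has been built for some $k<r$, note that $\dim W_{k}\geq 1$, apply the fact above with $t=s_{k+1}$, and let $W_{k+1}$ be an irreducible component of $W_{k}\cap V(s_{k+1})$ of dimension $\geq r-(k+1)$. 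After $r$ steps one obtains a nonempty $W_{r}\subseteq W_{0}\cap V(s_{1})\cap\dots\cap V(s_{r})\subseteq Z\cap R(\pi_{p})$, so $(\star)$ holds.

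I do not anticipate a genuine obstacle here. The only points demanding care are the identification of $s$ with the tuple $(s_{1},\dots,s_{r})$ and the implication $d_{i}\geq 2\Rightarrow\mathcal{O}_{X}(d_{i}-1)$ ample; once $R(\pi_{p})$ is realized as the intersection of $r$ ample divisors on $X$, the conclusion is forced, the bound $\dim Z\geq r$ being exactly what the count consumes (dimension $r-1$ would not suffice in general, which matches the statement).
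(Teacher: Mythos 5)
Your proposal is correct and is essentially the paper's own argument, spelled out in full: the paper simply notes that $R(\pi_{p})$ is globally cut out on $X$ by $r$ hypersurfaces (the zero loci of the components $s_{i}\in\Gamma(X,\mathcal{O}_{X}(d_{i}-1))$ of $s$ under the splitting of the normal bundle) and declares $(\star)$ obvious, which is exactly your ampleness-plus-dimension-count. Your elaboration of why each step preserves nonemptiness and drops dimension by at most one is a faithful unpacking of the paper's two-line proof.
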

\begin{proof}
The ramification locus $R(\pi_{p})$ is globally defined by $r$ hypersurfaces on $X$.
The condition $(\star)$ is obviously satisfied.
\end{proof}


\begin{lem}\label{l2}
Assume $n\geq 3r-2$.
Then the condition $(\star)$ is satisfied.
\end{lem}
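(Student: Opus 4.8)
The plan is to realize the ramification locus as the zero scheme of a section of a globally generated rank-$r$ bundle and to reduce the condition $(\star)$ to the non-vanishing of a top Chern number. Recall that $R(\pi_{p})$ is the zero scheme of the section $s=s_{p}$ of $E:=N_{X/\P^{n+r}}(-1)$, which has rank $r$. From the twisted Euler sequence $0\to\mathcal{O}_{\P^{n+r}}(-1)\to\mathcal{O}_{\P^{n+r}}^{\oplus(n+r+1)}\to T_{\P^{n+r}}(-1)\to 0$ together with the twisted normal sequence $0\to T_{X}(-1)\to T_{\P^{n+r}}(-1)|_{X}\to E\to 0$, the bundle $E$ is a quotient of a trivial bundle; in particular it is globally generated, hence nef.

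First I would reduce $(\star)$ to the statement that $\deg\bigl(c_{r}(E)\cap[Z]\bigr)\neq 0$ for every irreducible subvariety $Z\subseteq X$ with $\dim Z=r$. Indeed, given a closed subset $Z_{0}\subseteq X$ of dimension $\geq r$ and a point $p\notin X$, pick an irreducible $Z\subseteq Z_{0}$ of dimension exactly $r$; if $R(\pi_{p})\cap Z=\emptyset$, then $s_{p}|_{Z}$ is a nowhere-vanishing section of $E|_{Z}$, which produces a subbundle $\mathcal{O}_{Z}\hookrightarrow E|_{Z}$ with locally free quotient of rank $r-1$, and the Whitney formula then gives $c_{r}(E|_{Z})=c_{r}\bigl(E|_{Z}/\mathcal{O}_{Z}\bigr)=0$, so $\deg\bigl(c_{r}(E)\cap[Z]\bigr)=0$, a contradiction. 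Since $E$ is nef this degree is automatically $\geq 0$, so the real content is its strict positivity.

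Next I would interpret this Chern number through the Gauss map $\gamma\colon X\to\mathbb{G}(n,n+r)$, $x\mapsto\mathbb{T}_{x}X$, which is a morphism because $X$ is smooth and is finite onto its image. One has $E\cong\gamma^{*}Q$ for $Q$ the tautological rank-$r$ quotient bundle, and $s_{p}$ is the pullback of the section of $Q$ whose zero scheme is the Schubert variety $\sigma_{p}=\{\Lambda:p\in\Lambda\}$, of class $c_{r}(Q)\cap[\mathbb{G}]$. Hence $\deg\bigl(c_{r}(E)\cap[Z]\bigr)=\deg(\gamma|_{Z})\cdot\deg\bigl(\sigma_{p}\cdot[\gamma(Z)]\bigr)$, and since $\deg(\gamma|_{Z})\geq 1$ the desired non-vanishing is equivalent to $\bigcup_{x\in Z}\mathbb{T}_{x}X=\P^{n+r}$, i.e.\ to the tangent variety of $X$ along $Z$ filling the ambient space. (A more computational variant would instead expand $c(E)=(1+h+h^{2}+\cdots)\,s(T_{X}(-1))$ with $h=c_{1}(\mathcal{O}_{X}(1))$ and estimate $\deg\bigl(c_{r}(E)\cap[Z]\bigr)=\sum_{j=0}^{r}\deg\bigl(h^{j}\,s_{r-j}(T_{X}(-1))\cap[Z]\bigr)$ against its leading term $\deg Z$.)

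The hard part is this tangent-variety assertion, and it is where $n\geq 3r-2$ must enter. Suppose $\bigcup_{x\in Z}\mathbb{T}_{x}X\neq\P^{n+r}$. The incidence variety $I_{Z}=\{(x,q):x\in Z,\ q\in\mathbb{T}_{x}X\}$ is irreducible of dimension $n+r$ and dominates this tangent variety, so over a general point $q$ the fibre $\{x\in Z:q\in\mathbb{T}_{x}X\}$ has dimension $\geq 1$; choosing a curve in such a fibre and differentiating the Gauss map along it, one finds that for a general point $z\in Z$ and a general tangent vector $\xi\in T_{z}X$ there is a nonzero $v\in T_{z}Z$ with $\mathrm{II}_{z}(v,\xi)=0$, where $\mathrm{II}_{z}\colon S^{2}T_{z}X\to N_{z}X$ is the second fundamental form. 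Equivalently, the $r\times r$ matrix-valued linear map $\xi\mapsto\mathrm{II}_{z}(\,\cdot\,,\xi)|_{T_{z}Z}$ on $T_{z}X$ takes all its values in the determinantal hypersurface of $\Hom(T_{z}Z,N_{z}X)$. I expect the main obstacle to be ruling this out: one should combine the classification of linear spaces of singular square matrices with the constraints that smoothness of $X$ imposes on $\mathrm{II}_{z}$ — concretely, Zak's theorem on tangencies — to force $n\leq 3r-3$, contrary to hypothesis. Granting the tangent-variety statement, $(\star)$ follows from the reduction in the second paragraph.
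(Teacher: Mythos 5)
Your setup is sound and runs parallel to the paper's up to the reduction step: realizing $R(\pi_{p})$ as the zero scheme of a section of $E=N_{X/\P^{n+r}}(-1)$ and reducing $(\star)$ to the positivity of $\deg\bigl(c_{r}(E)\cap[Z]\bigr)$ for every irreducible $r$-dimensional $Z\subseteq X$ is exactly the right move, and your Gauss-map reformulation of that degree is correct. The problem is that you then leave the essential point unproved. Showing that $\bigcup_{x\in Z}\mathbb{T}_{x}X=\P^{n+r}$ for \emph{every} $r$-dimensional $Z$ is a much stronger statement than $\Tan(X)=\P^{n+r}$, and your sketch of how to rule out the degenerate case --- differentiating the Gauss map along a curve in a fibre, landing in a determinantal locus of $\Hom(T_{z}Z,N_{z}X)$, and then invoking ``the classification of linear spaces of singular square matrices'' together with Zak's theorem on tangencies to force $n\leq 3r-3$ --- is not an argument; it is a guess about where the hypothesis $n\geq 3r-2$ should enter, and no step of it is carried out. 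As written, the proof has a genuine gap at its core.

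The paper closes this gap quite differently, and the contrast is instructive. It only ever needs the \emph{global} statement $\Tan(X)=\P^{n+r}$, which follows from the Fulton--Hansen connectedness theorem ($\Tan(X)=\Sec(X)$ for $n\geq r$) together with Zak's linear normality theorem ($\Sec(X)=\P^{n+r}$ for $n\geq 2r$); this guarantees that for general $p$ the section is regular, so $c_{r}(E)\cap[X]$ is a nonzero effective class. The hypothesis $n\geq 3r-2$ is then used for the Barth--Larsen theorem, which gives $H^{2i}(X,\Z)=\Z\cdot\bigl(h^{i}\cap[X]\bigr)$ for $0\leq i\leq r-1$, where $h=c_{1}(\mathcal{O}_{X}(1))$. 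Expanding $c_{r}(E)=\sum_{i\geq 0}(-h)^{i}\cdot c_{r-i}(N_{X/\P^{n+r}})$ and using the self-intersection formula for the top term, one concludes that $c_{r}(E)=a\cdot h^{r}$ in $H^{2r}(X,\Z)$ for an integer $a$, whose positivity is read off from the effective class $c_{r}(E)\cap[X]$. The positivity of $\deg\bigl(c_{r}(E)\cap[Z]\bigr)=a\cdot\deg Z$ for every $Z$ is then automatic, with no pointwise analysis of the second fundamental form required. If you want to salvage your route, replace the unproved tangent-variety-along-$Z$ claim by this Barth--Larsen computation; your own equivalence then shows, a posteriori, that the tangent spaces along any $r$-dimensional $Z$ do sweep out $\P^{n+r}$, but that should be a corollary of the lemma, not an input to it.
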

\begin{proof}
We may assume that $r\geq 2$.
We denote by $\Tan(X)$ the tangent variety of $X$.
We prove $\Tan(X)=\P^{n+r}$.
We denote by $\Sec(X)$ the secant variety of $X$.
We have $\Tan(X)=\Sec(X)$ as long as $n\geq r$
by the connectedness theorem of Fulton and Hansen \cite[Corollary 3.4.5]{L}.
On the other hand, we have $\Sec(X)=\P^{n+r}$ as long as $n\geq 2r$ by Zak's theorem on linear normality \cite[Corollary 3.4.26]{L}.
Therefore we have the desired equality of sets under our assumption.
It follows that $R(\pi_{p})$ is non-empty, and $\dim R(\pi_{p}) = n-r$ if $p$ is general.
We check the condition $(\star)$.
We may assume that $p$ is general so that $R(\pi_{p})$ is defined by a regular section.
We have
\[[R(\pi_{p})]=c_{r}(N_{X/\P^{n+r}}(-1))\cap [X]\]
 by \cite[Proposition 14.1]{F}.
For the right hand side, we have
\[
c_{r}(N_{X/\P^{n+r}}(-1))\cap [X]=\sum_{i\geq0} (-c_{1}(\mathcal{O}_{X}(1)))^{i}\cdot c_{r-i}(N_{X/\P^{n+r}})\cap [X]
\]
by \cite[Example 3.2.2]{F}.
We have
\[
c_{r}(N_{X/\P^{n+r}})\cap [X]= \deg X\cdot c_{1}(\mathcal{O}_{X}(1))^{r}\cap [X]
\]
by the self-intersection formula \cite[Corollary 6.3]{F}.
On the other hand, we have
\[
H^{2i}(X,\Z)=\Z\cdot (c_{1}(\mathcal{O}_{X}(1))^{i}\cap [X]) \text{ for any }0\leq i\leq r-1
\]
by the Barth-Larsen theorem \cite[Theorem 3.2.1]{L}
under the assumption $n\geq 3r-2$.
Therefore we have
\[
[R(\pi_{p})] 
= a\cdot c_{1}(\mathcal{O}_{X}(1))^{r}\cap [X]\in H^{2r}(X,\Z)
\]
for some positive integer $a$.
For any closed subset $Z\subset X$ of dimension $\geq r$, we have
\[
\deg(Z\cdot [R(\pi_{p})]) = a\cdot \deg Z \neq 0,
\]
which implies $Z\cap R(\pi_{p})\neq 0$.
The proof is done.
\end{proof}

\begin{lem}\label{l3}
Assume $r=2$.
Then the condition $(\star)$ is satisfied.
\end{lem}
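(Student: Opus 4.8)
The plan is to reduce the case $r=2$ to a direct geometric analysis rather than relying on Barth–Larsen cohomology vanishing, which in codimension $2$ only gives information in degree up to $2(r-1)=2$, hence not quite enough to conclude $[R(\pi_p)]$ is a positive multiple of $c_1(\mathcal O_X(1))^2\cap[X]$ by the argument of Lemma~\ref{l2}. Instead, I would argue as follows. Since $r=2$, the ramification locus $R(\pi_p)$, when nonempty and of expected dimension, is a codimension-$2$ subscheme of $X$ whose class in $CH_{n-2}(X)$ is $c_2(N_{X/\P^{n+r}}(-1))\cap[X]$. The key point to establish is that for \emph{every} point $p\notin X$, any closed subset $Z\subset X$ with $\dim Z\geq 2$ meets $R(\pi_p)$. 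First I would dispose of the degenerate possibilities: if $\dim R(\pi_p)>n-2$ for some $p$, then $R(\pi_p)$ is a divisor and hence meets every positive-dimensional subvariety, so $(\star)$ holds trivially for that $p$; likewise $R(\pi_p)=\emptyset$ is impossible for all $p$ once $n\geq 2r$ (as in Lemma~\ref{l2}, $\Tan(X)=\P^{n+r}$). So I may assume $R(\pi_p)$ has pure codimension $2$.

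The heart of the argument is then an intersection-theoretic positivity statement: for a codimension-$2$ subvariety $Z\subset X$ (it suffices to take $Z$ of dimension exactly $2$ by cutting down), the number $\deg\big(Z\cdot c_2(N_{X/\P^{n+r}}(-1))\cap[X]\big)$ is strictly positive, so $Z$ cannot be disjoint from $R(\pi_p)$. Expanding as in Lemma~\ref{l2}, using $\mathcal O_X(1)$-twists and the self-intersection formula, this degree equals $\deg X\cdot\deg Z - (\text{correction terms involving }c_1(N_{X/\P^{n+r}})\cap[X])$. The subtlety is that without Barth–Larsen I cannot identify $c_1(N_{X/\P^{n+r}})\cap[X]$ with a multiple of the hyperplane class on the nose; but since $X$ is a smooth non-degenerate surface-codimension variety of dimension $n\geq 3r-2=4$ I \emph{can} apply the Barth–Larsen theorem in degree $2(r-1)=2$, which gives $H^2(X,\Z)=\Z\cdot h$ where $h=c_1(\mathcal O_X(1))\cap[X]^\vee$. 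Thus $c_1(N_{X/\P^{n+r}})\cap[X]$, being a codimension-$1$ class, \emph{is} an integer multiple of $h$; combined with $X$ non-degenerate and the adjunction/normal-bundle exact sequence $0\to T_X\to T_{\P^{n+r}}|_X\to N_{X/\P^{n+r}}\to 0$, one computes this multiple explicitly (it is $\deg X+n+r+1-($index$)$ type expression, and with index $1$ it is determined), and one checks directly that the resulting expression for $\deg(Z\cdot[R(\pi_p)])$ is positive for all $p$ and all surfaces $Z$.

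I expect the main obstacle to be the uniformity in $p$: for \emph{special} $p$ the scheme $R(\pi_p)$ may fail to be reduced or of expected dimension, so the clean formula $[R(\pi_p)]=c_2(N_{X/\P^{n+r}}(-1))\cap[X]$ via \cite[Proposition 14.1]{F} is only literally valid for general $p$. The remedy is the standard one: the section $s\in\Gamma(X,N_{X/\P^{n+r}}(-1))$ always has its zero scheme supported on a locus whose \emph{every} component has codimension $\leq 2$, and when a component has codimension exactly $2$ its contribution to the localized top Chern class is positive, while if some component has codimension $\leq 1$ we are in the divisorial case already handled. So one invokes the refined/localized Chern-class intersection formula (\cite[Chapter 14]{F}) to conclude that the cycle-theoretic class of $R(\pi_p)$, or of its codimension-$2$ part together with the divisorial part, still pairs positively with $[Z]$ for every $Z$ of dimension $\geq 2$. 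Once that positivity is in hand, $Z\cap R(\pi_p)\neq\emptyset$ follows and $(\star)$ is proved.
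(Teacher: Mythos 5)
Your proposal does not match the paper's proof and, more importantly, does not establish the lemma in the cases where it says something new. The paper disposes of Lemma \ref{l3} by citing Zak's theorem for codimension-two varieties (\cite[Chapter II, Corollary 2.5]{Za}), a projective-geometric statement valid for every non-degenerate smooth $X$ of codimension $2$ with no hypothesis on $n$. Your route, where it works, is not actually different from Lemma \ref{l2}: for $r=2$ the bound $n\geq 3r-2$ reads $n\geq 4$, and the proof of Lemma \ref{l2} already handles the top Chern class $c_2(N_{X/\P^{n+2}})$ by the self-intersection formula, needing Barth--Larsen only in degrees $\leq 2(r-1)=2$ to control $c_1(N_{X/\P^{n+2}})$. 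So your opening claim that the argument of Lemma \ref{l2} is ``not quite enough'' in codimension $2$ is a misreading, and the argument you then build for $n\geq 4$ is essentially the same one, correctly executed but redundant.

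The genuine content of Lemma \ref{l3} beyond Lemma \ref{l2} is the range $n\leq 3$ (and independence from the Barth--Larsen machinery), and there your proof has a real gap. For $n=3$, $r=2$, Barth--Larsen gives nothing in degree $2$ (it would require $2\leq n-r=1$), so $H^{2}(X,\Z)$ need not be generated by the hyperplane class; the Segre variety $\P^1\times\P^2\subset\P^5$ is a smooth non-degenerate example. Then (a) $c_1(N_{X/\P^5})$ is not a multiple of $h$, and positivity of $\deg\bigl(Z\cdot c_2(N_{X/\P^5}(-1))\cap[X]\bigr)$ for an \emph{arbitrary} surface $Z\subset X$ cannot be deduced from the non-emptiness of $R(\pi_p)$ for general $p$, since positivity against one class in $N_2(X)$ does not imply positivity against all effective surface classes when the rank exceeds $1$; and (b) your disposal of the excess-dimensional case fails, because an effective divisor on such an $X$ need not be ample and need not meet a given surface (the disjoint surfaces $\{0\}\times\P^2$ and $\{1\}\times\P^2$ in $\P^1\times\P^2$ give a counterexample to ``a divisor meets every subvariety of dimension $\geq 2$''). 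You also implicitly use that $X$ has index $1$ when computing $c_1(N_{X/\P^{n+2}})$, which is not an assumption here: Section 2 assumes only that $X$ is smooth and non-degenerate. To prove the lemma as stated you should invoke Zak's result as the paper does, or else restrict your statement to $n\geq 4$, where it is already subsumed by Lemma \ref{l2}.
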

\begin{proof}
It follows from \cite[Chapter II, Corollary 2.5]{Za}.
\end{proof}

\section{Multiplicity bounds for cycles}
In this section, we prove the following key proposition:
\begin{prop}\label{p1}
Let $X\subset \P^{N}$ be a non-degenerate smooth projective variety of codimension $r$.
Assume that 
$X$ satisfies 
the condition $(\star)$.
Let $\alpha$ be an effective cycle on $X$ of codimension $s$ such that 
\[\alpha = m \cdot c_{1}(\mathcal{O}_{X}(1))^{s}\cap [X] \in N^{s}(X).\]
Then we have $e_{x}(\alpha)\leq m$ for any closed point $x\in X$ away from a closed subset of dimension $<rs$,
where we use the convention $\dim(\emptyset)=-1$.
\end{prop}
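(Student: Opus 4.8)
The plan is to adapt the hypertangent-linear-system technique used by Pukhlikov, Cheltsov and the author in the complete intersection case, the one new ingredient being that the ramification loci $R(\pi_p)$ of Section~2 take over the role played there by truncations of the defining equations, with $(\star)$ guaranteeing that enough of them pass through a prescribed subvariety. Write $n=\dim X=N-r$. Since the Samuel multiplicity is upper semicontinuous, $B:=\{x\in X:e_x(\alpha)>m\}$ is closed, and the assertion is vacuous when $rs>n-s$, because $B\subseteq\Supp\alpha$ has dimension at most $n-s$; so I may assume $rs\le n-s$ and, arguing by contradiction, fix an irreducible $Z\subseteq B$ of dimension $rs$. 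I induct on $s$: the case $s=0$ is immediate, since $X$ is smooth and so $e_x(m[X])=m$ for every $x\in X$.

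For $s\ge 1$ the idea is to cut $\alpha$ down, in a chain of proper intersections, to a zero-cycle concentrated at a single point $x\in Z$, keeping track at each stage of both the numerical class --- which should stay a multiple of a power of $c_1(\mathcal{O}_X(1))$ with coefficient $\le m$ --- and the multiplicity at the running point. Two kinds of cutting cycles enter: generic linear sections (class $c_1(\mathcal{O}_X(1))$, multiplicity $1$), and ``tangent'' cycles extracted from the ramification data. For the second kind, at the $j$-th tangent step the running bad locus is a subvariety of $Z$ of dimension $\ge r$ --- this is exactly where $\dim Z=rs$ is spent, the $s$ tangent steps consuming a codimension $r$ of $Z$ each --- so $(\star)$, together with a dimension count on the incidence variety $\{(x,p):p\in T_xX\}\subseteq X\times\P^N$ (whose projection to $\P^N$ is dominant on every subvariety of $X$ of dimension $\ge r$, by $(\star)$), produces a center $p$ with $R(\pi_p)$ meeting the running bad locus in the expected dimension; one then feeds $R(\pi_p)$ into the chain, possibly after intersecting it with a generic linear space, or replacing it by a hyperplane section $H'\cap X$ with $T_xX\subseteq H'$ (the hyperplanes $H'\supseteq T_xX$ forming a $\P^{r-1}$). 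The two analytic inputs are the multiplicativity inequality $e_x(\gamma\cdot\delta)\ge e_x(\gamma)\,e_x(\delta)$ for cycles meeting properly in the smooth variety $X$ (via the regular embedding of the diagonal), which ensures the large multiplicity of $\alpha$ at $x$ is preserved and in fact amplified by the tangent cycles, and the Chern-class bookkeeping of Section~2 --- the self-intersection formula $c_r(N_{X/\P^N})\cap[X]=\deg X\cdot c_1(\mathcal{O}_X(1))^r\cap[X]$ and the expansion of $c_r(N_{X/\P^N}(-1))$ --- which controls the class of $[R(\pi_p)]$ and hence of every cycle in the chain. After $s$ tangent steps the running bad locus still has dimension $rs-rs=0$, so it is nonempty and provides a point $x\in Z$ through which a chain ending in a zero-cycle has been built; comparing the degree of that zero-cycle with its multiplicity at $x$ --- the tangent cycles contributing multiplicity factors strictly larger than their degree factors, the linear sections contributing $1$ to both --- forces $e_x(\alpha)\le m$, contradicting $x\in Z\subseteq B$. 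Equivalently, one may organize the $j$-th tangent step as a reduction of the codimension-$s$ cycle $\alpha$ to an effective codimension-$(s-1)$ cycle of class $\le m\,c_1(\mathcal{O}_X(1))^{s-1}\cap[X]$ with bad locus of dimension $\ge r(s-1)$, and then quote the inductive hypothesis.

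I expect the crux to lie in the bookkeeping inside this step, and in particular in the last of the following three points. One must: (i) make the successive intersections proper, i.e.\ arrange that generic choices keep $\Supp\alpha$ and its successive intersections meeting the linear sections and the ramification loci in the expected dimensions, which is delicate because $Z$ is not assumed to sit in general position; (ii) choose at each tangent step a cutting cycle that carries the needed extra multiplicity along a sufficiently large subvariety of the running bad locus, not just at a single point --- here the validity of $(\star)$ for every center $p$, not only generic ones, is essential; and (iii) keep the numerical coefficient from ever exceeding $m$, which is exactly what pins the exponent to $rs$: each tangent step may spend a codimension $r$ of $Z$, yet is allowed to contribute only the class $c_1(\mathcal{O}_X(1))^r$ to the chain.
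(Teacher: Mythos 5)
Your opening reduction is fine (upper semicontinuity, restricting to subvarieties $Z$ of dimension $rs$, and the vacuity when $rs>n-s$), and the closing mechanism you aim for --- a B\'ezout-type comparison $\deg(\alpha\cdot\beta)\ge\sum_x e_x(\alpha)e_x(\beta)$ via \cite[Corollary 12.4]{F} --- is indeed how the paper finishes. But the construction that makes this comparison give the \emph{sharp} bound $e_x(\alpha)\le m$ is absent from your plan. The paper does not cut $\alpha$ down by a chain of tangent cycles. Starting from $Z$ alone, it builds an auxiliary effective cycle $\beta$ of the complementary dimension $s$ by \emph{iterated residual intersections with cones}: take the cone $C$ over $Z$ with vertex a general external point $p$, let $R$ be the residual scheme to $Z$ in $X\cap C$, and form the residual intersection class $\R$ (made rigorous by passing to $\P_{\P^N}(\mathcal{O}\oplus\mathcal{O}(-1))$ so that $Z$ becomes a Cartier divisor in the cone). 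The two facts that drive the proof are $c_1(\mathcal{O}_X(1))\cap\R=c_r(N_{X/\P^N}(-1))\cap[Z]$ and the set-theoretic identity $Z\cap \widetilde R=Z\cap R(\pi_p)$; condition $(\star)$ guarantees each residual has the expected dimension, and after $s$ iterations one gets a dimension-$s$ cycle $\beta$ meeting $\Supp\alpha$ in finitely many points with $|Z\cap\Supp\beta|\ge\deg\beta$. That last inequality --- at least one point of $\beta$ on $Z$ per unit of degree of $\beta$ --- is exactly what turns $m\deg\beta=\deg(\alpha\cdot\beta)\ge e_Z(\alpha)\,|Z\cap\Supp\beta|$ into $e_Z(\alpha)\le m$. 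Your proposal never mentions the cone over $Z$ or the residual scheme, which is the one genuinely new object in the argument.

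The substitutes you do propose cannot be made to close numerically, which is why I regard this as a gap rather than an alternative route. One has $[R(\pi_p)]=a\cdot c_1(\mathcal{O}_X(1))^r\cap[X]$ with $a$ a positive integer that is not $1$ in general (for a hypersurface of degree $d$ it is $d-1$), and $R(\pi_p)$ has no reason to have multiplicity bigger than $1$ at points of $Z$; so intersecting with it multiplies the numerical coefficient by $a$ while multiplying the multiplicity at $x$ by only $e_x(R(\pi_p))\ge 1$, destroying the ratio you need to preserve. Hyperplane sections $H'\cap X$ with $T_xX\subset H'$ give $e_x\ge 2$ at class $h$, but there are only $r$ independent such hyperplanes, and the resulting estimate is of the shape $e_x(\alpha)\le m\deg X/2^{r}$, which is far weaker than $e_x(\alpha)\le m$. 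Finally, your inductive step ``reduce the codimension-$s$ cycle to a codimension-$(s-1)$ cycle'' runs in the wrong direction: intersecting never lowers codimension, and the paper's induction-free argument instead raises the \emph{dimension} of the auxiliary cycle by $1-r$ at each of the $s$ residual steps, from $\dim Z=rs$ down to $s$. So the bookkeeping you yourself flag as the crux in your point (iii) cannot be carried out with the cycles you have put on the table.
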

\begin{rem}
The complete intersection case is proved by Pukhlikov \cite[Proposition 5]{P4}, Cheltsov \cite[Lemma 13]{C3}, and the author \cite[Proposition 2.1]{S}.
\end{rem}

\noindent
{\it Step} $1$:
We review the construction of residual intersection classes due to Fulton.

\begin{thm}[\cite{F}, Theorem 9.2]\label{t}
We consider a diagram
\[\xymatrix{
& R\ar[d]^{b} &\\
D\ar[r]^{a} & W \ar[r]^{j} \ar[d]^{g} & V \ar[d]^{f}\\
& X\ar[r]^{i} & Y
},
\]
where the square is a fiber square, $i, j, a, b$ are closed embeddings, and $V$ is a $k$-dimensional variety.
Assume that:
\begin{enumerate}
\item[(i)] $i$ is a regular embedding of codimension $r$;
\item[(ii)] $ja$ embeds $D$ as a Cartier divisor of $V$;
\item[(iii)] $R$ is the residual scheme to $D$ in $W$.
\end{enumerate}
Let $N=g^{*}N_{X}Y$ and $\mathcal{O}(-D) = j^{*}\mathcal{O}_{V}(-D)$.
We define the residual intersection class $\R\in CH_{k-r}(R)$ by the formula
\[
\R=\left\{c(N\otimes \mathcal{O}(-D))\cap s(R, V)\right\}_{k-r}.
\]
Then we have
\begin{eqnarray*}
X\cdot_{Y} V = \left\{c(N)\cap s(D, V)\right\}_{k-r} + \R
\end{eqnarray*}
in $CH_{k-r}(W)$.
\end{thm}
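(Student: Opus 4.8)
The plan is to reduce the formula to a single identity about Segre classes and then to establish that identity by passing to a blow-up of $V$. First I would invoke the basic description of the intersection product for a regular embedding \cite[Proposition 6.1]{F}: since $i$ is regular of codimension $r$ with normal bundle $N_{X}Y$, and the square is a fiber square so that $W=f^{-1}(X)$, one has
\[
X\cdot_{Y}V=\left\{c(N)\cap s(W,V)\right\}_{k-r}\in CH_{k-r}(W),
\]
where $N=g^{*}N_{X}Y$ and $s(W,V)$ is the Segre class of $W$ in $V$. With this in hand the theorem becomes equivalent to the decomposition
\[
c(N)\cap s(W,V)=c(N)\cap s(D,V)+c(N\otimes\mathcal{O}(-D))\cap s(R,V)
\]
in $CH_{\ast}(W)$, the summands being pushed forward along $D\hookrightarrow W$ and $R\hookrightarrow W$; taking the component of dimension $k-r$ then yields the statement with $\R=\{c(N\otimes\mathcal{O}(-D))\cap s(R,V)\}_{k-r}\in CH_{k-r}(R)$.

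The key input is the factorization of ideal sheaves $\mathcal{I}_{W}=\mathcal{I}_{D}\cdot\mathcal{I}_{R}$ coming from (iii), in which $\mathcal{I}_{D}=\mathcal{O}(-D)$ is invertible by (ii); since $D$ is an effective Cartier divisor on a variety, its local equation is a nonzerodivisor and the factorization is exact. Because tensoring an ideal by an invertible sheaf does not change the associated blow-up, the blow-ups $\pi\colon\tilde{V}\to V$ of $V$ along $W$ and along $R$ coincide. Writing $E$ for the exceptional Cartier divisor of the blow-up along $R$, the total transform of $W$ is the Cartier divisor $E+\pi^{*}D$. I would then compute the relevant Segre classes by the blow-up formula \cite[\S4.2]{F},
\[
s(W,V)=\pi_{\ast}\frac{E+\pi^{*}D}{1+E+\pi^{*}D}\cap[\tilde{V}],\qquad s(R,V)=\pi_{\ast}\frac{E}{1+E}\cap[\tilde{V}],
\]
together with $s(D,V)=\pi_{\ast}\frac{\pi^{*}D}{1+\pi^{*}D}\cap[\tilde{V}]$, which holds since $D$ is already Cartier. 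A direct manipulation of the generating series gives the split
\[
\frac{E+\pi^{*}D}{1+E+\pi^{*}D}=\frac{\pi^{*}D}{1+\pi^{*}D}+\frac{E}{(1+\pi^{*}D)(1+E+\pi^{*}D)};
\]
the first term pushes forward to $s(D,V)$ by the projection formula (both $N$ and $\pi^{*}D$ being pulled back), isolating the entire residual contribution in the second, $E$-supported term.

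It remains to identify the pushforward of $c(N)$ capped against that second term with $c(N\otimes\mathcal{O}(-D))\cap s(R,V)$, and this twist bookkeeping is the part I expect to be the main obstacle. The factors of $\pi^{*}D$ in the denominator must be converted, via the projection formula $\pi_{\ast}(\pi^{*}\gamma\cdot\beta)=\gamma\cdot\pi_{\ast}\beta$ and the standard Chern-class identity expressing $c(N\otimes\mathcal{O}(-D))$ through the roots of $N$ \cite[Example 3.2.2]{F}, into the passage from $c(N)$ to $c(N\otimes\mathcal{O}(-D))$, while the remaining $E$-series must reassemble into $s(R,V)$. Carrying this out precisely requires the tautological relation $\mathcal{O}_{\tilde{V}}(-E)|_{E}\cong\mathcal{O}_{E}(1)$ on the exceptional divisor and careful control of the interaction between the variable $E$ and the pulled-back divisor $\pi^{*}D$; once the residual term is matched term by term, comparing the components of dimension $k-r$ delivers the asserted equality $X\cdot_{Y}V=\{c(N)\cap s(D,V)\}_{k-r}+\R$.
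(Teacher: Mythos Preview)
The paper does not prove this theorem; it merely quotes it verbatim from Fulton \cite[Theorem 9.2]{F} as background for constructing residual intersection classes, and gives no argument of its own. So there is no ``paper's proof'' to compare your attempt against.

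For what it is worth, your sketch follows the architecture of Fulton's own proof: reduce to a Segre-class identity via \cite[Proposition 6.1]{F}, use the factorization $\mathcal{I}_{W}=\mathcal{I}_{D}\cdot\mathcal{I}_{R}$ with $\mathcal{I}_{D}$ invertible to identify the blow-ups along $W$ and $R$, and decompose the generating series on $\tilde{V}$. The step you flag as the ``main obstacle''---converting the $E$-supported term into $c(N\otimes\mathcal{O}(-D))\cap s(R,V)$---is precisely the content of Fulton's Proposition 9.2 and its Lemma, and you have not actually carried it out; your proposal stops at the point where the real computation begins. If you want a self-contained proof you must either execute that twist calculation explicitly or, as the paper does, simply cite Fulton.
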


\begin{rem}
In the setting of Theorem \ref{t}, the class $\R$ is represented by an effective cycle if $\dim R =k-r$.
Indeed, let $R_{1},\cdots, R_{t}$ be the irreducible components of $R$.
Then we have
\[
\R = s(R, V)_{k-r} = \sum_{i=1}^{t} e_{i}[R_{i}],
\]
where $e_{i}$ is the same as Samuel's multiplicity $e(q)$ of the primary ideal $q$ determined by $R$ in the local ring $\mathcal{O}_{V, R_{i}}$, which is positive \cite[Corollary 9.2.2]{F}.
\end{rem}

We construct a residual intersection class associated to the linear projection from a closed point.
Let $X\subsetneq \P^{N}$ be a non-degenerate smooth projective variety of codimension $r$.
We assume that $X$ satisfies the condition $(\star)$.
Let $Z\subsetneq X$ be a closed subvariety with $\dim Z\geq r$.
Let $p\in \P^{N}$ be a general closed point.
Let $C=\overline{\bigcup_{x\in Z} \langle p,x \rangle}$ be the cone of $Z$ with the vertex $p$.
Let $R$ be the residual set to $Z$ in $X\cap C$.
We have a diagram with a fiber square
\begin{eqnarray}\label{d1}
\xymatrix{
& R\ar[d] &\\
Z\ar[r] & X\cap C \ar[r] \ar[d] & C \ar[d]\\
& X\ar[r] & \P^{N}
}.
\end{eqnarray}
We want to
define the residual intersection class $\R\in CH_{\dim Z+1-1}(R)$, 
but 
$Z\subset C$ is not a Cartier divisor in general.

To remedy this situation, 
we consider the projective bundle $\P=\P_{\P^{N}}(\mathcal{O}\oplus \mathcal{O}(-1))$ with closed subvarieties $E=\P_{\P^{N}}(\mathcal{O})$, $F=\P_{\P^{N}}(\mathcal{O}(-1))$, and $\widehat{X}=\P_{X}(\mathcal{O}\oplus \mathcal{O}(-1))$.
The projective bundle $\P$ is the blow-up of $\P^{N+1}$ along a closed point $q$ with the exceptional divisor $E$;
the natural projection $\P\rightarrow \P^{N}$ is the resolution of the linear projection $\P^{N+1}\dashrightarrow \P^{N}$ from $q$.
Moreover we identify $F$ with the base $\P^{N}$, and also with its image in $\P^{N+1}$, a hyperplane disjoint from $q$.
Let $\widetilde{p}\in \langle p, q\rangle$ be a closed point different from $p\in F$ and $q$.
Let $\widetilde{C}$ be the cone of $Z\subset F$ with the vertex $\widetilde{p}$.
Then $Z\subset \widetilde{C}$ is a Cartier divisor.
In addition, we have $q\not\in\widetilde{C}$ by the generality of $p$, thus we identify $\widetilde{C}$ with its inverse image in $\P$.
We let $\widetilde{R}$ be the residual scheme to $Z$ in $\widehat{X}\cap \widetilde{C}$.
We have a diagram with a fiber square
\begin{eqnarray}\label{d2}
\xymatrix{
& \widetilde{R}\ar[d] &\\
Z\ar[r] & \widehat{X}\cap \widetilde{C} \ar[r] \ar[d] & \widetilde{C} \ar[d]\\
& X \ar[r] & \P^{N}
}.
\end{eqnarray}
Let $\widetilde{\R}\in CH_{\dim Z+1 -r}(\widetilde{R})$ be the residual intersection class.
Let $f\colon \widetilde{C}\rightarrow C$ be the restriction of the natural projection $\P\rightarrow \P^{N}$, which is a finite morphism.
We replace $R$ by $f(\widetilde{R})$ if necessary.
We define $\R$ to be $f_{*}\widetilde{\R}$.

\begin{lem}\label{l7}
We have
\[c_{1}(\mathcal{O}_{X}(1))\cap \R = c_{r}(N_{X/\P^{N}}(-1))\cap [Z].\]
\end{lem}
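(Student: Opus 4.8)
The plan is to compute both sides of the claimed identity by pushing everything down from the resolved picture on $\P = \P_{\P^N}(\mathcal{O}\oplus\mathcal{O}(-1))$ to $X$, using the definition $\R = f_*\widetilde{\R}$ together with the residual intersection formula of Theorem~\ref{t}. Concretely, in diagram \eqref{d2} we have $N = g^*N_{X/\P^N}$ and $\mathcal{O}(-D) = j^*\mathcal{O}_{\widetilde{C}}(-Z)$, and the residual class is $\widetilde{\R} = \{c(N\otimes\mathcal{O}(-D))\cap s(\widetilde{R},\widetilde{C})\}$ in the appropriate degree. The first step is to identify the line bundle $\mathcal{O}_{\widetilde{C}}(Z)$: since $\widetilde{C}$ is the cone over $Z\subset F$ with vertex $\widetilde{p}$, the hyperplane class on $F\cong\P^N$ pulls back to a class whose restriction to $Z$ is $c_1(\mathcal{O}_X(1))$, and $Z$ itself is cut out on $\widetilde{C}$ by (the proper transform of) the hyperplane $F$; so $\mathcal{O}_{\widetilde{C}}(Z)$ restricts to $\mathcal{O}_Z(1)$ near $Z$. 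This is the geometric input that converts the Cartier divisor $D=Z$ into the hyperplane class once we intersect against $c_1(\mathcal{O}_X(1))$.

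Next I would exploit the full residual intersection identity $X\cdot_{\P^N}\widetilde{C} = \{c(N)\cap s(Z,\widetilde{C})\} + \widetilde{\R}$ in $CH_*(\widehat{X}\cap\widetilde{C})$ and push forward to $X$. The left side is the class of a linear-section-type intersection; after capping with $c_1(\mathcal{O}_X(1))$ the contribution of $\widetilde{C}$ is controlled because a general such cone restricts on $X$ to the hyperplane class. The term $\{c(N)\cap s(Z,\widetilde{C})\}$, pushed to $X$ and capped with $c_1(\mathcal{O}_X(1))$, should be rewritten using $s(Z,\widetilde{C}) = (\text{something involving }\mathcal{O}_Z(1))\cap[Z] + (\text{lower terms})$, since $Z$ is a Cartier divisor in $\widetilde{C}$ and hence $s(Z,\widetilde{C}) = \sum_{k\ge 1}(-1)^{k-1}Z^k\cap[\widetilde C]$, restricted appropriately — but the key point is that capping with $c_1(\mathcal{O}_X(1))$ and the dimension count will kill all but the top piece. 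Isolating the component of the correct dimension $\dim Z + 1 - r$ after the extra cap, and using $c_1(\mathcal{O}_X(1))\cap[Z]$ in place of these Chern-class gymnastics, collapses the combination $\{c(N)\}\cap(\text{hyperplane class on }Z)$ into exactly $c_r(N_{X/\P^N}(-1))\cap[Z]$ by the same manipulation $c_r(N(-1)) = \sum_i(-c_1(\mathcal{O}(1)))^i c_{r-i}(N)$ already used in the proof of Lemma~\ref{l2}.

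The main obstacle, I expect, is bookkeeping the passage between $\widetilde{C}$ (where $Z$ is Cartier) and $C$ (where it is not), i.e. making precise that $f_*$ applied to the various Segre and Chern terms behaves well — in particular that $f$ being finite of degree one onto its image lets us identify $f_*s(\widetilde{R},\widetilde{C})$-type terms with Segre classes downstairs, and that the extra $\P^1$-bundle direction (the $\mathcal{O}\oplus\mathcal{O}(-1)$ factor) contributes nothing after capping with the pullback of $c_1(\mathcal{O}_X(1))$ from $X$. Once that is set up, the identity follows by comparing the two expressions for $c_1(\mathcal{O}_X(1))\cap(X\cdot_{\P^N}\widetilde{C})$ pushed to $X$: the residual term gives the left-hand side of the Lemma, and the principal term gives $c_r(N_{X/\P^N}(-1))\cap[Z]$ after the Chern-class rewriting. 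I would also need the projection formula repeatedly and the fact, guaranteed by condition $(\star)$ together with $\dim Z\ge r$, that all the relevant cycles have the expected dimensions so that no degenerate excess-intersection corrections intervene.
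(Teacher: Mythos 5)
Your plan follows essentially the same route as the paper's proof: apply Theorem \ref{t} to diagram (\ref{d2}), push forward by $f$ using that it is generically one-to-one, identify $s(Z,\widetilde{C})$ with $c(\mathcal{O}(1))^{-1}\cap[Z]$ via the Cartier divisor structure $Z = F\cap\widetilde{C}$, cap with $c_1(\mathcal{O}_X(1))$, and collapse the result using the self-intersection formula and the expansion $c_r(N(-1))=\sum_i(-c_1(\mathcal{O}(1)))^i c_{r-i}(N)$. The argument is correct as outlined and matches the paper's.
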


\begin{proof}
We apply Theorem \ref{t} to the diagram (\ref{d2}).
We have
\[
X\cdot_{\P^{N}} \widetilde{C} = \left\{c(N)\cap s(Z, \widetilde{C})\right\}_{\dim Z+1-r} + \widetilde{\R}
\]
in $CH_{\dim Z +1 -r}(\widehat{X}\cap \widetilde{C})$.
Since the morphism $f$ is generically one-to-one by the generality of $p$, we have $f_{*}[\widetilde{C}]=[C]$.
Thus we have
\[
X\cdot C =f_{*}(X\cdot_{\P^{N}} \widetilde{C})= \left\{c(N)\cap f_{*}s(Z, \widetilde{C})\right\}_{\dim Z +1-r} + \R
\]
in $CH_{\dim Z+1-r}(X\cap C)$,
where the first equality follows from \cite[Theorem 6.2 (a)]{F}.
Since the 
universal sub line bundles 
on $\P^{N}$, $\P^{N+1}$, and 
$\P$
all coincide on $\widetilde{C}$,
we use the same notation $\mathcal{O}(-1)$ when restricted.
We have
\begin{eqnarray*}
c(N)\cap f_{*}s(Z, \widetilde{C}) &=& c(N)\cap f_{*}\left(c(\mathcal{O}(1))^{-1}\cap [Z]\right)\\
&=& \sum_{i,j\geq 0} (-c_{1}(\mathcal{O}(1)))^{i}\cdot c_{j}(N)\cap [Z],
\end{eqnarray*}
where the first equality follows from \cite[Proposition 4.1 (a)]{F}.
Then we have
\[
\R  = X\cdot C + \sum_{i\geq 1}(-1)^{i}c_{1}(\mathcal{O}_{X}(1))^{i-1}\cdot c_{r-i}(N_{X/\P^{N}})\cap [Z].
\]
Applying
$c_{1}(\mathcal{O}_{X}(1))\cap$
to the both sides,
we have
\begin{eqnarray*}
 c_{1}(\mathcal{O}_{X}(1))\cap \R &=& X\cdot Z + \sum_{i\geq 1}(-c_{1}(\mathcal{O}_{X}(1)))^{i}\cdot c_{r-i}(N_{X/\P^{N}})\cap [Z]\\
 &=& c_{r}(N_{X/\P^{N}}(-1))\cap [Z],
\end{eqnarray*}
where the first equality follows from \cite[Proposition 6.3]{F} with
\[
c_{1}(\mathcal{O}(1))\cap [C] = f_{*}(c_{1}(\mathcal{O}(1))\cap [\widetilde{C}])=[Z]
\]
and the second equality follows from the self-intersection formula \cite[Corollary 6.3]{F}.
The proof is done.
\end{proof}

\begin{lem}
As a set, we have
\[
Z\cap \widetilde{R}= Z\cap R(\pi_{p}).
\]
\end{lem}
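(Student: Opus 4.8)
The plan is to unravel the definitions of $\widetilde{R}$ and $R(\pi_{p})$ and compare them scheme-theoretically, then pass to the underlying sets. Recall that $\widetilde{R}$ is the residual scheme to $Z$ in $\widehat{X}\cap\widetilde{C}$, where $\widetilde{C}$ is the cone over $Z\subset F\cong\P^{N}$ with vertex $\widetilde{p}$. Since $Z$ is a Cartier divisor on $\widetilde{C}$, the residual scheme $\widetilde{R}$ is defined by the ideal sheaf $(I_{\widehat{X}\cap\widetilde{C}} : I_{Z})$ inside $\widehat{X}\cap\widetilde{C}$, so set-theoretically a point $z\in Z$ lies in $\widetilde{R}$ precisely when $\widehat{X}\cap\widetilde{C}$ fails to coincide with $Z$ to first order at $z$ along the normal direction of $Z$ in $\widetilde{C}$; equivalently, when the tangent space $T_{z}\widehat{X}$ contains the line $\langle\widetilde{p},z\rangle$ (the ruling of $\widetilde{C}$ through $z$). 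Under the identification of $F$ with $\P^{N}$ and the fact that the projection $\P\to\P^{N}$ restricts to $f$, the line $\langle\widetilde{p},z\rangle$ maps to the secant line $\langle p,z\rangle$, and $T_{z}\widehat{X}$ maps to $T_{z}X$ (plus the fiber direction, which is killed by the projection).

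Concretely, first I would describe the local picture: near a point $z\in Z$, choose affine coordinates so that $\widehat{X}$ and $\widetilde{C}$ are cut out by explicit equations, with $Z$ their intersection along the divisor. The residual condition at $z$ becomes a rank/containment condition on the differentials, which I would then translate into the statement ``the line of $\widetilde{C}$ through $z$ is tangent to $\widehat{X}$ at $z$.'' Second, I would push this down via $f$: since $f$ is finite and generically one-to-one, and since $q\notin\widetilde{C}$, the geometry of $\widetilde{C}$ over a point $z\in Z$ is identified with that of the genuine cone $C$ with vertex $p$, and the ruling line $\langle\widetilde{p},z\rangle$ corresponds to $\langle p,z\rangle$. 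Third, I would invoke the standard description of the ramification locus: by definition $R(\pi_{p})$ is the zero-scheme of the section $s\in\Gamma(X,N_{X/\P^{N}}(-1))$ determined by $p$, and a point $z\in X$ (hence $z\in Z$) lies in $R(\pi_{p})$ exactly when $d\pi_{p}$ drops rank at $z$, i.e.\ when $\langle p,z\rangle\subset T_{z}X$. Matching the two first-order conditions then gives $Z\cap\widetilde{R}=Z\cap R(\pi_{p})$ as sets.

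The main obstacle I anticipate is the bookkeeping across the three ambient spaces $\P^{N}$, $\P^{N+1}$, and $\P=\P_{\P^{N}}(\mathcal{O}\oplus\mathcal{O}(-1))$: one must be careful that the ``Cartier-ification'' trick (replacing $C$ by $\widetilde{C}$ inside $\P$) does not alter the first-order incidence data along $Z$. This is where the hypotheses $q\notin\widetilde{C}$ and that $\widetilde{p}\in\langle p,q\rangle$ is distinct from $p$ and $q$ are used: they guarantee that $f\colon\widetilde{C}\to C$ is an isomorphism in a neighborhood of $Z$ away from the vertex, so the tangency condition is genuinely preserved. Once this identification is set up cleanly, the comparison of the two zero-loci is a local differential computation that I would not belabor. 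I would also note that only the \emph{set-theoretic} equality is claimed, which lets me ignore any discrepancy in scheme structure (multiplicities) between $\widetilde{R}$ and $R(\pi_{p})$ along $Z$.
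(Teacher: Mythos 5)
Your proposal is correct and follows essentially the same route as the paper, which simply cites the argument of Pukhlikov's Lemma 3 in \cite{P4}: the point is exactly the one you identify, namely that since $Z$ is Cartier in $\widetilde{C}$, local equations $f_{i}$ of $X$ restrict on $\widetilde{C}$ to $t\cdot g_{i}$, so $z\in Z$ lies in $\widetilde{R}$ iff each $f_{i}$ vanishes to order $\geq 2$ along the ruling $\langle \widetilde{p},z\rangle$, i.e.\ iff $\langle p,z\rangle\subset T_{z}X$, which is the defining condition for $z\in R(\pi_{p})$. The bookkeeping across $\P^{N}$, $\P^{N+1}$ and $\P$ that you flag is handled exactly as you say, by $q\notin\widetilde{C}$ and the fact that $f$ identifies the ruling through $z$ with the secant $\langle p,z\rangle$.
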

\begin{proof}
It follows from the same argument as in the proof of \cite[Lemma 3]{P4}.
\end{proof}

By the condition $(\star)$, we have $Z\cap R(\pi_{p})\neq \emptyset$ and $\dim Z\cap R(\pi_{p}) = \dim Z -r$.
Therefore we have $Z\cap \widetilde{R}\neq \emptyset$ and $\dim Z\cap \widetilde{R} = \dim Z -r$.
It implies that $\dim \widetilde{R} = \dim Z+1-r$.
Therefore the class $\widetilde{\R}$ is represented by an effective ($\dim Z +1-r$)-cycle whose support is $\widetilde{R}$,
which implies that the class $\R$ is represented by an effective ($\dim Z+1-r$)-cycle whose support is $R$.

\begin{rem}
There is another possible definition of the residual intersection class $\R'\in CH_{\dim Z+1-r}(R)$ for the diagram (\ref{d1}) using the blow-up along $Z$.
We refer the reader to \cite[Definition 9.2.2]{F} for the details.
We prove $\R'=\R$.
We apply \cite[Corollary 9.2.3]{F} to the diagram (\ref{d1}).
We have
\[
X\cdot C = \left\{c(N)\cap s(Z, C)\right\}_{\dim Z+1-r} +\R'.
\]
The inverse image scheme $f^{-1}(Z)\subset \widetilde{C}$
is the union of $Z$ and a closed subscheme supported on the inverse image $f^{-1}(Z^{ne})$ of 
the non-embedding locus $Z^{ne}$ on $Z$ of the linear projection from $p$.
We have
\[
\left\{c(N)\cap s(Z, C)\right\}_{\dim Z+1-r} =\left\{c(N)\cap f_{*}s(Z, \widetilde{C})\right\}_{\dim Z+1-r}
\]
by \cite[Proposition 4.2]{F} together with $\dim Z^{ne}\leq \dim Z-r$.
Therefore we have the desired equality.
The proof is done.
\end{rem}

\noindent
{\it Step} $2$:
We move cycles by multiple residual intersections.
\begin{lem}\label{mcbmri}
Let $X\subset \P^{N}$ be a non-degenerate smooth projective variety of codimension $r$.
Assume that $X$ satisfies the condition ($\star$).
Let $A\subset X$ be a closed subset of codimension $s$.
Let $Z\subset A$ be a subvariety of dimension $rs$.
Then there is a cycle $\beta$ on $X$ such that
\begin{enumerate}
\item[(i)] $\dim \beta =s$, and $\beta$ intersects $A$ in finitely many points;
\item[(ii)] $|Z\cap \Supp(\beta)|\geq \deg \beta$.
\end{enumerate}
\end{lem}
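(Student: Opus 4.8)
The plan is to build $\beta$ by iterating the residual intersection construction from Step 1 a total of $s$ times, peeling off one unit of codimension at each step while dragging the subvariety $Z$ along so that the number of points in $Z\cap\Supp(\beta)$ stays controlled from below by the degree. The key numerical bookkeeping is the following: if $W\subset X$ is a subvariety of dimension $d\geq r$ and $p$ is a general point, then the residual set $R$ to $W$ in $X\cap C$ (with $C$ the cone over $W$ with vertex $p$) has dimension $d+1-r$, is represented by an effective cycle $\mathcal{R}$, and by Lemma \ref{l7} satisfies $c_1(\mathcal{O}_X(1))\cap \mathcal{R} = c_r(N_{X/\P^N}(-1))\cap[W]$; in particular $\deg\mathcal{R} = \deg(c_r(N_{X/\P^N}(-1))\cap[W])$, which is a fixed multiple of $\deg W$. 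The crucial point is that $\mathcal{R}$ meets $W$ in exactly $\dim W - r$ dimensions and is supported on $R$, so $W\cap\Supp(\mathcal{R}) = W\cap R(\pi_p)$ has the expected dimension; this is where the condition $(\star)$ is used.

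First I would set up the induction. Start with $W_0 = Z$, a subvariety of dimension $rs$. Apply the residual construction with a general point $p_1$ to get an effective cycle $\mathcal{R}_1$ on $X$ of dimension $rs+1-r = r(s-1)+1$, supported on a set $R_1$ that meets $Z$ in dimension $rs - r = r(s-1)$. Pick an irreducible component $W_1$ of $R_1$ such that $\dim(Z\cap W_1) = r(s-1)$ — such a component exists since $Z\cap\Supp(\mathcal{R}_1) = Z\cap R(\pi_{p_1})$ has dimension $r(s-1)$ — and note $\dim W_1 = r(s-1)+1$, which is $\geq r$ as long as $s-1\geq 1$; actually we need $\dim W_1\geq r$ to re-apply the construction, i.e. $r(s-1)+1\geq r$, which holds for $s\geq 1$. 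Hmm — more carefully, after $k$ steps we want a subvariety $W_k$ of dimension $r(s-k)+k$ with $\dim(Z\cap W_k) = r(s-k)$; re-applying the construction needs $\dim W_k\geq r$, i.e. $r(s-k)+k\geq r$, i.e. $(s-k-1)r + k\geq 0$, which holds whenever $k\leq s-1$ or ($k=s$, trivially not re-applied). So we can iterate exactly $s$ times, arriving at $W_s$ of dimension $s$ with $\dim(Z\cap W_s) = 0$, i.e. $Z\cap W_s$ is a nonempty finite set.

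Next I would track the degree and the count of points. At each step the residual cycle $\mathcal{R}_{k+1}$ obtained from $W_k$ satisfies $\deg\mathcal{R}_{k+1} = \deg(c_r(N_{X/\P^N}(-1))\cap[W_k])$ by Lemma \ref{l7}. Choosing $W_{k+1}$ to be a component of $\Supp(\mathcal{R}_{k+1})$ and letting $\beta = W_s$ as a reduced cycle (or a suitable effective cycle supported on $W_s$), we need $|Z\cap\Supp(\beta)|\geq\deg\beta$. The honest way to get this is to keep $\beta$ equal to the \emph{full} effective residual cycle rather than a single component at the last stage — but then intermediate stages must still carry a subvariety to cone over. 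So the cleaner route: at stages $1,\dots,s-1$ replace $W_k$ by a single component $W_{k}$ of maximal-dimensional intersection with $Z$ (losing degree, which only helps the final inequality since the degree of the final $\beta$ only decreases), and at the final stage $s$ set $\beta = \mathcal{R}_s$, the whole effective zero-dimensional residual cycle. Then $\deg\beta = \deg(c_r(N_{X/\P^N}(-1))\cap[W_{s-1}])$, and $Z\cap\Supp(\beta) = Z\cap R(\pi_{p_s})$ inside $W_{s-1}$. The inequality $|Z\cap\Supp(\beta)|\geq\deg\beta$ then requires comparing the number of points in $Z\cap W_{s-1}\cap R(\pi_{p_s})$ with the degree of $\mathcal{R}_s$; this should follow because for general $p_s$ the cone $C$ over $W_{s-1}$ meets $W_{s-1}$ exactly along $W_{s-1}\cap R(\pi_{p_s})$ with the right multiplicities, and each residual point lies in this locus. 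Finally, property (i) — that $\beta$ meets $A$ in finitely many points — is automatic since $\dim\beta = s$ and $A$ has codimension $s$, provided $\beta$ is not contained in $A$; this can be arranged by the generality of the points $p_1,\dots,p_s$, as the residual cycles move out of any fixed proper subset.

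The main obstacle I expect is the last-step counting inequality (ii): making precise that the effective residual zero-cycle $\mathcal{R}_s$ has at least $\deg\beta$ distinct points lying in $Z$, rather than points with multiplicity or points off $Z$. This needs the Bertini-type genericity of $p_s$ to ensure the residual scheme is reduced at the relevant points and that \emph{all} of its points fall into the locus $W_{s-1}\cap R(\pi_{p_s})\subset Z$ — the latter following from the set-theoretic identity $Z\cap\widetilde R = Z\cap R(\pi_p)$ established in Step 1 together with a dimension count forcing the entire residual zero-cycle to sit over $Z$. The remaining steps (dimension bookkeeping, degree formula, avoiding $A$) are routine given Lemma \ref{l7} and condition $(\star)$.
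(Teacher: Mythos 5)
Your overall strategy---iterate the residual intersection construction $s$ times and take $\beta$ to be the final residual cycle---is the same as the paper's, and your dimension bookkeeping ($\dim R_j = r(s-j)+j$, so $\dim R_s = s$) is correct. But there is a genuine gap at the point you yourself flag, and it is caused by a choice you make earlier: discarding all but one irreducible component $W_k$ of the residual set at each intermediate stage. The paper's proof works because it keeps the \emph{entire} residual cycle $\R_j$ at every step, so that both sides of the inequality (ii) are computed from the same starting class: one shows $\deg \R_s = \deg\bigl(\bigl(c_r(N_{X/\P^N}(-1))\bigr)^s\cap [Z]\bigr)$ by iterating Lemma \ref{l7}, and separately that
\[
Z\cap \Supp(\R_s)\ \supseteq\ Z\cap R(\pi_{p_1})\cap\cdots\cap R(\pi_{p_s}),
\]
whose cardinality (for general $p_j$, by transversality and condition $(\star)$) is the intersection number $\deg\bigl(\bigl(c_r(N_{X/\P^N}(-1))\bigr)^s\cap[Z]\bigr)$ --- the \emph{same} number. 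The inequality (ii) is thus an exact numerical coincidence, not a comparison of two independently estimated quantities.

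Once you replace $R_{s-1}$ by a single component $W_{s-1}$, this coincidence is destroyed. Your $\deg\beta$ is governed by $\deg W_{s-1}$ (via $c_1(\mathcal{O}(1))^{s-1}c_r(N_{X/\P^N}(-1))\cap[W_{s-1}]$), while the points of $\Supp(\beta)$ that you can certify to lie in $Z$ are governed by the intersection of the $r$-dimensional set $Z\cap W_{s-1}$ with $R(\pi_{p_s})$, i.e.\ by $\deg(Z\cap W_{s-1})$. These two degrees are unrelated, and nothing forces $\deg(Z\cap W_{s-1})\geq \deg W_{s-1}$; your remark that losing degree ``only helps'' ignores that the left-hand side of (ii) shrinks along with the right-hand side, and potentially much faster. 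Relatedly, your proposed fix---``a dimension count forcing the entire residual zero-cycle to sit over $Z$''---cannot work: the residual set $R_s$ is not contained in $Z$ (nor in $W_{s-1}$); the identity $Z\cap\widetilde R = Z\cap R(\pi_p)$ only controls the part of $R_s$ meeting the coned-over variety $W_{s-1}$, not the original $Z$. The repair is simply to not pass to components: cone over the full residual cycle at every stage, as the paper does, so that $\deg\R_s$ and $|Z\cap R(\pi_{p_1})\cap\cdots\cap R(\pi_{p_s})|$ are literally the same Chern number of $[Z]$.
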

\begin{proof}
The proof is essentially the same as in \cite[Proposition 5]{P4} (see also \cite[Proposition 2.1]{S}).
We only give a sketch.
We take a general closed point $p\in \P^{N}$ and construct $R$ and $\R$ as in Step $1$.
We define $p_{1}=p$, $R_{1}=R$ and $\R_{1}=\R$.
We replace $R_{0}=Z$ by $R_{1}$, and repeat the same procedure to define $p_{j}$, $R_{j}$ and $\R_{j}$ for $j=1, \cdots, s$.
We have $\dim R_{j}=rs-(r-1)j$
and
\[
\deg \R_{j} = \deg\left(\left(c_{r}(N_{X/\P^{N}}(-1)\right)^{j}\cap [Z]\right).
\]
In particular, we have $\dim R_{s}= s$
and
\[
\deg \R_{s} = \deg\left(\left(c_{r}(N_{X/\P^{N}}(-1)) \right)^{s}\cap [Z]\right).
\]
By careful dimension count using the ramification locus and joins of varieties as in \cite[Lemma 1]{P4} (see also \cite[Lemma 2.3]{S} and \cite[Lemma 2.4]{S}) and by the condition $(\star)$,
we have
\[
\dim A\cap R_{j}=\dim A\cap R_{j-1}\cap R(\pi_{p})=\dim A\cap R_{j-1}-r.
\]
By induction, we have
\[
\dim A\cap R_{j} 
=rs-rj.
\]
In particular, we have
\[
\dim A\cap R_{s}=0.
\]
Moreover we have
\[
Z\cap R_{s}\supseteq Z\cap R_{1}\cap \cdots \cap R_{s} \supseteq Z\cap R(\pi_{p_{1}})\cap \cdots \cap R(\pi_{p_{s}})
\]
and
\begin{eqnarray*}
|Z\cap R(\pi_{p_{1}})\cap \cdots \cap R(\pi_{p_{s}})|&=&Z\cdot [R(\pi_{p_{1}}))]\cdot \ldots \cdot [R(\pi_{p_{s}})]\\
 &=& \deg\left(\left(c_{r}(N_{X}/\P^{N}(-1)) \right)^{s}\cap [Z]\right).
\end{eqnarray*}
Therefore $\beta = \R_{s}$ satisfies the condition.
\end{proof}

\noindent
{\it Step} $3$:

\begin{proof}[Proof of Proposition \ref{p1}]
We take $\alpha$ as in the statement.
By the upper semicontinuity of Samuel multiplicity,
it is enough to prove that $e_{Z}(\alpha)\leq m$ for any closed subvariety $Z\subset X$ of dimension $rs$.
We may assume that $\alpha \neq 0$ and $Z\subset \Supp(\alpha)$.
Then there is an effective cycle $\beta$ on $X$ such that
\begin{enumerate}
\item[(i)] $\dim \beta = s$, and $\beta$ intersects $\alpha$ in finitely many closed points;
\item[(ii)] $|Z\cap \Supp(\beta)|\geq \deg \beta$.
\end{enumerate}
We have
\begin{eqnarray*}
m\cdot\deg(\beta) &=&\alpha\cdot \beta \\
&=& \sum_{x\in \Supp(\alpha)\cap\Supp(\beta)}i(x, \alpha\cdot\beta; X)\\
&\geq& \sum_{x\in Z\cap \Supp(\beta)}i(x, \alpha\cdot\beta; X)\\
&\geq& \sum_{x\in Z\cap \Supp(\beta)}e_{x}(\alpha)\cdot e_{x}(\beta)\\
&\geq& e_{Z}(\alpha)\cdot \deg(\beta),
\end{eqnarray*}
where the second (resp. third) inequality follows from \cite[Corollary 12.4]{F} (resp. the upper semi-continuity of Samuel multiplicity).
We divide the both sides by $\deg(\beta)$.
The proof is done.
\end{proof}

\section{Proof of Theorem \ref{t1}}

We prove a stronger version of Theorem \ref{t1}:
\begin{thm}\label{t1'}
Let $X\subset \P^{n+r}$ be a Fano manifold of index $1$, dimension $n$ and codimension $r$.
Assume that $X$ is $2r$-normal, that is, the restriction map 
\[H^{0}(\P^{n+r},\mathcal{O}_{\P^{n+r}}(2r))\rightarrow H^{0}(X,\mathcal{O}_{X}(2r))\]
 is surjective, and $n\geq 10 r$. 
Then $X$ is birationally superrigid and K-stable.
\end{thm}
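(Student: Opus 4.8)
The plan is to follow the strategy of Zhuang \cite{Z}, replacing the complete intersection hypothesis with the $2r$-normality assumption precisely at the point where Lemma~\ref{l1} is used, and instead invoking the multiplicity bound Proposition~\ref{p1}. First I would recall the two standard criteria that reduce birational superrigidity and K-stability to a single local statement about movable/mobile linear systems. For superrigidity, by the Noether--Fano method (in the form used by Pukhlikov, de Fernex, and Zhuang) it suffices to show that for any mobile linear system $\mathcal{M}\subset|\mathcal{O}_X(k)|$ the pair $(X,\tfrac{1}{k}\mathcal{M})$ is canonical. For K-stability, by Zhuang's ``$\delta$-invariant via multiplicities'' argument (building on Fujita and Stibitz--Zhuang), it suffices to establish a uniform inequality of the form $\operatorname{lct}(X;\mathcal{M})\geq \tfrac{n}{n+1}\cdot\tfrac{1}{k}$-type bound, or more precisely to bound the log canonical threshold of such systems from below by a constant $>\tfrac{n}{n+1}$ after normalizing degrees; both reduce to controlling multiplicities $\operatorname{mult}_Z(\mathcal{M})$ of the system along subvarieties $Z\subset X$ of various dimensions. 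So the core is a multiplicity/$\operatorname{lct}$ estimate for linear systems, exactly as in \cite{Z}.

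The next step is to set up the induction on codimension of the center, exactly as in Pukhlikov's and Zhuang's treatment of complete intersections. Given a mobile linear system $\mathcal{M}\sim k\mathcal{O}_X(1)$ and a potential maximal center $Z$, one considers a general member and its self-intersection cycle, produces an effective cycle $\alpha$ of some codimension $s$ with $\alpha\equiv m\cdot c_1(\mathcal{O}_X(1))^s\cap[X]$, and needs $e_x(\alpha)$ or $\operatorname{mult}_Z(\alpha)$ to be bounded by roughly $m$ (up to the known numerical factors). This is precisely where \cite{Z} uses that on a complete intersection the relevant cycle-theoretic bound holds on all of $X$; here I would instead apply Proposition~\ref{p1}, which gives $e_x(\alpha)\leq m$ for $x$ outside a closed subset of dimension $<rs$. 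One therefore has to check that the condition $(\star)$ holds under the hypotheses of the theorem: here $n\geq 10r$ forces $n\geq 3r-2$, so Lemma~\ref{l2} applies and $(\star)$ is satisfied. The only genuinely new bookkeeping is that Proposition~\ref{p1} has an exceptional locus of dimension $<rs$, so when the center $Z$ has dimension $\geq rs$ the bound applies at a general point of $Z$ and hence (by upper semicontinuity, as in Step~3 of the proof of Proposition~\ref{p1}) along $Z$ itself; when $\dim Z<rs$ one must run a separate, lower-dimensional argument, which is why the numerical hypothesis $n\geq 10r$ (rather than the weaker $n\geq 3r-2$) is needed to make the dimension counts close up.

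After that, the argument should be a fairly faithful transcription of \cite{Z}: one stratifies by the dimension $d$ of the center $Z$ of a hypothetical non-canonical singularity (or of a valuation computing a small $\operatorname{lct}$), and in each range of $d$ one combines the multiplicity bound from Proposition~\ref{p1} with the projective-normality input. The role of $2r$-normality is to guarantee that the restriction of $\mathcal{O}_{\P^{n+r}}(2r)$ generates enough sections on $X$ so that the secant/tangency and degree estimates that Zhuang performs for complete intersections (which there come for free from the defining equations of degree $\leq 2r$, say) still go through: concretely, one needs that an effective cycle numerically proportional to a power of $\mathcal{O}_X(1)$ which is very singular along $Z$ forces a contradiction with $\deg X$ via an intersection with a suitable auxiliary complete intersection of hypersurfaces pulled back from $\P^{n+r}$ of degree $\leq 2r$, and $2r$-normality is exactly what lets one cut $X$ by such hypersurfaces in general position. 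I expect the main obstacle to be precisely this matching of constants: verifying that the exceptional locus of dimension $<rs$ in Proposition~\ref{p1}, together with the worst-case estimates in the low-dimensional strata, is dominated by the slack provided by $n\geq 10r$, so that the final $\operatorname{lct}$ bound is strictly above $\tfrac{n}{n+1}$ and the Noether--Fano inequality is violated. Once the numerics are checked, superrigidity follows from the Noether--Fano/Corti method and K-stability from Zhuang's criterion, giving Theorem~\ref{t1'} and hence Theorem~\ref{t1}.
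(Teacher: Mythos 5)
Your overall skeleton matches the paper's: Noether--Fano reduces superrigidity to showing $\left(X,\tfrac{1}{m}\mathcal{M}\right)$ is canonical, Lemma~\ref{l2} gives $(\star)$ since $n\geq 10r\geq 3r-2$, and Proposition~\ref{p1} (applied to a member $D$ of $\mathcal{M}$ and to the self-intersection cycle $D_1\cap D_2$) confines the non-canonical locus of $\left(X,\tfrac{1}{m}\mathcal{M}\right)$ to dimension $\leq r-1$ and the non-lc locus of $\left(X,\tfrac{2}{m}\mathcal{M}\right)$ to dimension $\leq 2r-1$. Up to that point your plan is faithful to the paper.

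The genuine gap is in how you propose to use $2r$-normality. You describe it as what ``lets one cut $X$ by hypersurfaces of degree $\leq 2r$ in general position'' to reach a degree contradiction; this has the hypothesis backwards and does not reflect a workable mechanism. One can always restrict degree-$2r$ hypersurfaces from $\P^{n+r}$ to $X$ without any normality assumption; what $2r$-normality (surjectivity of the restriction map) actually provides is an \emph{upper} bound $h^{0}(X,\mathcal{O}_{X}(2r))\leq h^{0}(\P^{n+r},\mathcal{O}_{\P^{n+r}}(2r))$, which by Lemma~\ref{l} descends to a general \emph{linear} section $Y$ of codimension $2r-1$: $h^{0}(Y,\mathcal{O}_{Y}(2r))\leq\binom{n+r+1}{2r}$. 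The actual closing step, which your proposal does not supply, is: take a point $x$ in the residual bad locus, cut by a general linear section $Y\ni x$ of codimension $2r-1$ so that $\left(Y,(1-\epsilon)\tfrac{2}{m}\mathcal{M}|_{Y}\right)$ is klt away from a finite set, note $\mathcal{O}_{Y}(2r)\sim_{\Q}K_{Y}+\tfrac{2}{m}\mathcal{M}|_{Y}$, and invoke Zhuang's Corollary~1.8 (a Nadel-vanishing-type criterion: an isolated non-klt point would force $h^{0}(Y,L)\geq\frac{(\dim Y)^{\dim Y}}{(\dim Y)!}$), which is contradicted because $\binom{n+r+1}{2r}<\frac{(n-2r+1)^{n-2r+1}}{(n-2r+1)!}$ when $n\geq 10r$ --- this is the precise and only place the numerical hypothesis enters. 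Inversion of adjunction then transports the contradiction back to $X$. Your ``separate lower-dimensional argument'' for centers of small dimension is exactly this step, but without identifying the $h^{0}$ bound and Zhuang's criterion the proof does not close. The K-stability part in the paper likewise goes through $\alpha(X)>\tfrac{1}{2}$ via Stibitz--Zhuang and Birkar, with the same two ingredients, rather than a $\delta$-invariant computation.
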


\begin{lem}\label{l}
Let $X\subset \P^{N}$ be a Fano manifold with the anti-canonical class a multiple of the hyperplane section class. 
Let $Y\subset X$ be a positive-dimensional linear section.
If $X$ is $k$-normal, so is $Y$.
\end{lem}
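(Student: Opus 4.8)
The plan is to read off the $k$-normality of $Y$ from that of $X$ by comparing the twisted ideal sheaves of $X$ and of $Y$ in $\P^{N}$. Recall that a closed subscheme $V\subset\P^{N}$ is $k$-normal exactly when $H^{1}(\P^{N},\mathcal{I}_{V/\P^{N}}(k))=0$, as follows from the sequence $0\to\mathcal{I}_{V/\P^{N}}(k)\to\mathcal{O}_{\P^{N}}(k)\to\mathcal{O}_{V}(k)\to 0$ together with the vanishing $H^{1}(\P^{N},\mathcal{O}_{\P^{N}}(k))=0$. Writing $Y=X\cap L$ for a linear subspace $L\subset\P^{N}$ of codimension $c$, and pushing forward from $X$, we get on $\P^{N}$ a short exact sequence
\[
0\to\mathcal{I}_{X/\P^{N}}(k)\to\mathcal{I}_{Y/\P^{N}}(k)\to\mathcal{I}_{Y/X}(k)\to 0 .
\]
Since $H^{1}(\P^{N},\mathcal{I}_{X/\P^{N}}(k))=0$ by hypothesis, the associated long exact sequence embeds $H^{1}(\P^{N},\mathcal{I}_{Y/\P^{N}}(k))$ into $H^{1}(X,\mathcal{I}_{Y/X}(k))$, so the whole statement reduces to proving $H^{1}(X,\mathcal{I}_{Y/X}(k))=0$.

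For this I would use that $Y$ is a complete intersection of $c$ hyperplane sections inside $X$. As $X$ is smooth, hence Cohen--Macaulay, and $\codim_{X}Y=c$, the $c$ linear forms cutting out $L$ restrict to a regular sequence on $X$; the Koszul complex then gives a locally free resolution of $\mathcal{I}_{Y/X}(k)$ whose $p$-th term, for $1\le p\le c$, is a direct sum of copies of $\mathcal{O}_{X}(k-p)$. Breaking this resolution into short exact sequences and chasing cohomology shows $H^{1}(X,\mathcal{I}_{Y/X}(k))=0$ provided $H^{p}(X,\mathcal{O}_{X}(k-p))=0$ for $p=1,\dots,c$.

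The remaining point — and the only one that needs genuine care — is precisely these vanishings, because the twists $k-p$ may be quite negative, so Kodaira vanishing alone is insufficient. The clean way out is the general fact that a smooth Fano $X$ satisfies $H^{i}(X,\mathcal{O}_{X}(m))=0$ for every $m\in\Z$ and every $0<i<\dim X$: for $m\ge 0$ this is Kodaira vanishing, writing $\mathcal{O}_{X}(m)=K_{X}\otimes A$ with $A$ ample, and for $m<0$ it follows from Serre duality, which turns the group into higher cohomology of $K_{X}$ twisted by an ample line bundle, again covered by Kodaira once the degree $\dim X-i$ is positive. The hypothesis $\dim Y\ge 1$ is used exactly here: it gives $c=\dim X-\dim Y\le\dim X-1$, so every $p\in\{1,\dots,c\}$ lies in the range $0<p<\dim X$ where the fact applies, and we are done. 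Alternatively one could induct on $c$, reducing to the single hyperplane section $Y=X\cap H$, which is handled directly by $0\to\mathcal{O}_{X}(-1)\to\mathcal{O}_{X}\to\mathcal{O}_{Y}\to 0$; but since the intermediate linear sections of an index-one Fano are no longer Fano, the middle-degree vanishing above would still have to be carried along as part of the induction hypothesis, so the Koszul version seems more economical.
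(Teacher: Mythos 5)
Your proof is correct. It bottoms out at the same crux as the paper's argument --- the vanishing $H^{i}(X,\mathcal{O}_{X}(m))=0$ for all $m\in\Z$ and $0<i<\dim X$, obtained from Kodaira vanishing together with Serre duality on the Fano $X$ --- but organizes the reduction differently. The paper stays entirely with structure sheaves: it writes the restriction as the composite of the surjections $H^{0}(\P^{N},\mathcal{O}_{\P^{N}}(k))\to H^{0}(X,\mathcal{O}_{X}(k))\to H^{0}(Y,\mathcal{O}_{Y}(k))$ and obtains the second surjection by peeling off one hyperplane at a time, which forces it to carry the intermediate vanishing $H^{i}(Y_{j},\mathcal{O}_{Y_{j}}(m))=0$ for $0<i<\dim Y_{j}$ along the whole chain of linear sections $X=Y_{0}\supset Y_{1}\supset\cdots\supset Y_{c}=Y$ (each deduced from the previous one by the same restriction sequences). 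You instead pass to ideal sheaves and resolve $\mathcal{I}_{Y/X}(k)$ by the Koszul complex, which collapses that induction into the single collection of vanishings $H^{p}(X,\mathcal{O}_{X}(k-p))=0$ for $1\le p\le c$, all computed on $X$ itself; this is exactly the economy you point out in your closing remark. Both arguments use the positive-dimensionality of $Y$ in the same way, namely to keep the relevant cohomological degrees strictly between $0$ and $\dim X$ (resp.\ $\dim Y_{j}$), and both implicitly assume, as one should, that the linear section has the expected codimension so that the defining linear forms restrict to a regular sequence on the Cohen--Macaulay $X$.
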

\begin{proof}
It is enough to prove that, if $Y\subset X$ is a positive-dimensional linear section, the restriction map
\[
H^{0}(X,\mathcal{O}_{X}(k))\rightarrow H^{0}(Y,\mathcal{O}_{Y}(k))
\]
is surjective for any $k\in \Z$.
It is enough to prove that, if $Y\subset X$ is a linear section, we have
\[
H^{i}(Y,\mathcal{O}_{Y}(j))=0 \text{ for any }0<i<\dim Y\text{ and  }j\in \Z.
\]
It is enough to prove
\[
H^{i}(X, \mathcal{O}_{X}(j))=0 \text{ for any }0<i<n \text{ and }j\in\Z.
\]
It follows from the Kodaira vanishing theorem.
The proof is done.
\end{proof}

\begin{proof}[Proof of Theorem \ref{t1'}]
The proof is essentially the same as in \cite[Theorem 1.2]{Z} and \cite[Lemma 3.6]{Z}.
Let $X$ as in the statement.
We have $-K_{X}=c_{1}(\mathcal{O}_{X}(1))\cap [X]$ and $r\geq 1$.
We have 
$\Pic(X)=\Z[\mathcal{O}_{X}(1)]$
as long as $n\geq r+2$ by the Barth-Larsen theorem \cite[Theorem 3.2.1]{L}.
The inequality $n\geq r+2$ follows from $r\geq 1$ and the assumption $n\geq 10r$.
We may assume that $X$ is non-degenerate.
The condition $(\star)$ is satisfied by Lemma \ref{l2} together with the assumption $n\geq 10r$
(or by Lemma \ref{l3} in the case $r=2$).

We prove that $X$ is birationally superrigid.
By the Noether-Fano enequality \cite[Theorem 1.26]{CS},
it is enough to prove that for any movable linear system $\mathcal{M}\subset |-m K_{X}|$, the pair $\left(X, \frac{1}{m}\mathcal{M}\right)$ is canonical.

\noindent
{\it Step} $1$:
We prove that 
\begin{enumerate}
\item[(i)] there exists a closed subset of dimension $Z\subset X$ of dimension $\leq r-1$ 
such that the pair $\left(X, \frac{1}{m}\mathcal{M} \right)$ is canonical away from $Z$;
\item[(ii)] there exists a closed subset $Z'\subset X$ of dimension $\leq 2r-1$
such that the pair $\left(X,\frac{2}{m}\mathcal{M}\right)$ is log canonical away from $Z'$.
\end{enumerate}
For $(i)$, we take $D\in \mathcal{M}$.
Then 
\[
[D]=m\cdot c_{1}(\mathcal{O}_{X}(1))\cap [X]\in N^{1}(X).
\]
By Proposition \ref{p1}, there exists a closed subset $Z$ of dimension $\leq r-1$ such that $e_{x}(D)\leq m$ for any closed point $x\in X$ away from $Z$.
Therefore the pair $\left(X, \frac{1}{m}D \right)$ is canonical away from $Z$ by \cite[3.14.1]{K1}.
For (ii), we take $D_{1}, D_{2} \in \mathcal{M}$ intersecting properly.
Let 
$B=D_{1}\cap D_{2}$.
Then 
\[
[B]=m^{2}\cdot c_{1}(\mathcal{O}_{X}(1))^{2}\cap [X]\in N^{2}(X).
\]
By Proposition \ref{p1}, there exists a closed subset $Z'\subset X$ such that $e_{x}(B)\leq m^{2}$ for any closed point $x\in X$ away from $Z'$.
For any such $x$, let $S\subset X$ be a general linear section of dimension $2$ through $x$.
Then the pair $\left(S, \frac{2}{m}B|_{S}\right)$ is log canonical at $x$ by \cite[Theorem 0.1]{dFEM1}.
By inversion of adjunction \cite[Theorem 1.1]{EM},
the pair $\left(X, \frac{2}{m}B\right)$ is log canonical at $x$.

\noindent
{\it Step} $2$:
We prove that for any closed point $x\in X$ and any general linear section $Y\subset X$ of codimension $2r-1$ through $x$,
the pair $\left(Y, \frac{1}{m}\mathcal{M}|_{Y}\right)$ is Kawamata log terminal (klt) at $x$.
We have $K_{Y}=2(r-1)\cdot c_{1}(\mathcal{O}_{Y}(1))\cap[Y]$.
Let $L=\mathcal{O}_{X}(2r)$.
Then $L\sim_{\Q}K_{Y}+\frac{2}{m}\mathcal{M}|_{Y}$.
The pair $\left(Y, (1-\epsilon)\frac{2}{m}\mathcal{M}|_{Y}\right)$ is klt away from a finite set for all $0<\epsilon\ll 1$,
and we have
\[
h^{0}(Y, L)\leq h^{0}(\P^{n-r+1}, \mathcal{O}_{\P^{n-r+1}}(2r))=\binom{n+r+1}{2r}<\frac{(n-2r+1)^{n-2r+1}}{(n-2r+1)!},
\]
where the first (resp. second) inequality follows from Lemma \ref{l} together with the $2r$-normality of $X$ (resp. the assumption $n\geq 10r$).
Then the pair $\left(Y, \frac{1}{m}\mathcal{M}|_{Y}\right)$ is klt by \cite[Corollary 1.8]{Z}.

\noindent
{\it Step} $3$:
Assume that the pair $\left(X, \frac{1}{m}\mathcal{M} \right)$ is not canonical at $x\in Z$.
Let $Y\subset X$ be a general linear section of codimension $2r-1$ throught $x$.
Then the pair $\left(Y, \frac{1}{m}\mathcal{M}|_{Y}\right)$ is not log canonical at $x$ by inversion of adjunction, a contradiction.

We prove that $X$ is $K$-stable.
Let 
\[
\alpha(X)
=\sup\left\{t \mid \left(X, t D\right) \text{ is log canonical for any }D\in |-K_{X}|_{\Q}\right\}
\]
be the alpha invariant.
By \cite[Theorem 1.2]{SZ}, it is enough to prove that $\alpha(X)>\frac{1}{2}$.
By \cite[Theorem 1.5]{B}, it is enough to prove that the pair $(X, \frac{1}{2}D)$ is klt for any $D\in|-K_{X}|_{\Q}$.
The proof is similar using Proposition \ref{p1} and \cite[Corollary 1.8]{Z}.

The proof is done.
\end{proof}

By analyzing the proof of Theorem \ref{t1'}, we can further strengthen the statement:

\begin{thm}\label{t1''}
Let $X\subset \P^{n+r}$ be a Fano manifold of index $1$, dimension $n$ and codimension $r$.
Assume
\[\sum_{i=0}^{2r-1}(-1)^{i}\cdot \binom{2r-1}{i} \cdot h^{0}(\mathcal{O}_{X}(2r-i)) < \frac{(n-2r+1)^{n-2r+1}}{(n-2r+1)!},\]
and $n\geq 3r-2$.
Then $X$ is birationally superrigid and K-stable.
\end{thm}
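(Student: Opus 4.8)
The plan is to follow the proof of Theorem \ref{t1'} and simply track where the numerical hypothesis "$h^0(Y,L) < \frac{(n-2r+1)^{n-2r+1}}{(n-2r+1)!}$" is actually used, replacing it with the weaker cohomological hypothesis stated here. First I would observe that the only places where the full strength of "$2r$-normality plus $n\geq 10r$" entered the argument were: (a) the invocation of Lemma \ref{l2} to guarantee the condition $(\star)$, which only required $n\geq 3r-2$ anyway; (b) the bound $h^0(Y,L)\leq h^0(\P^{n-r+1},\mathcal O(2r))$ obtained via Lemma \ref{l} from $2r$-normality; and (c) the purely numerical inequality $\binom{n+r+1}{2r} < \frac{(n-2r+1)^{n-2r+1}}{(n-2r+1)!}$, which followed from $n\geq 10r$. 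So the strategy is to keep (a) as is (now $n\geq 3r-2$ is a hypothesis), and replace (b)+(c) by a direct computation of $h^0(Y,L)$ for $Y\subset X$ a general linear section of codimension $2r-1$ and $L=\mathcal O_X(2r)$ restricted to $Y$.

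The key step is the computation of $h^0(Y,\mathcal O_Y(2r))$ without assuming projective (or $2r$-) normality of $X$. Here I would cut $Y$ out of $X$ by $2r-1$ general hyperplanes $H_1,\dots,H_{2r-1}$ and use the Koszul resolution of $\mathcal O_Y(2r)$ on $X$:
\[
0\to \mathcal O_X(1)\to \cdots \to \bigoplus \mathcal O_X(2r-i)^{\oplus\binom{2r-1}{i}}\to \cdots \to \mathcal O_X(2r)\to \mathcal O_Y(2r)\to 0.
\]
By the Kodaira vanishing argument already recorded in the proof of Lemma \ref{l} — namely $H^i(X,\mathcal O_X(j))=0$ for $0<i<n$ and all $j$, which holds since $X$ is Fano with $-K_X$ a multiple of the hyperplane class — chasing this Koszul complex through hypercohomology shows that all the intermediate cohomology cancels, so that
\[
h^0(Y,\mathcal O_Y(2r)) = \sum_{i=0}^{2r-1}(-1)^i\binom{2r-1}{i}h^0(X,\mathcal O_X(2r-i)),
\]
provided $n-(2r-1) = \dim Y \geq 1$, i.e. $n\geq 2r$ (guaranteed by $n\geq 3r-2$ and $r\geq 1$). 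One has to be slightly careful that the alternating sum of $h^0$'s genuinely equals the Euler characteristic computation rather than just bounding it, but the vanishing of all the $H^{>0}(X,\mathcal O_X(\bullet))$ in the relevant range makes the long exact sequences split into the desired identity cleanly.

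With that identity in hand, the hypothesis of Theorem \ref{t1''} reads exactly $h^0(Y,L) < \frac{(n-2r+1)^{n-2r+1}}{(n-2r+1)!}$, which is precisely the inequality that feeds into \cite[Corollary 1.8]{Z} in Step $2$ of the proof of Theorem \ref{t1'}. From that point on, Steps $1$ through $3$ of that proof, and the K-stability argument via $\alpha(X)>\frac12$, go through verbatim: Step $1$ uses only Proposition \ref{p1} (hence only $(\star)$, hence only $n\geq 3r-2$ via Lemma \ref{l2}), Step $2$ uses the computed $h^0(Y,L)$ together with klt-away-from-a-finite-set, and Step $3$ uses inversion of adjunction. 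The main obstacle — really the only non-bookkeeping point — is verifying the Koszul cancellation cleanly, i.e. making sure the intermediate terms $H^0$ of the truncations behave as claimed and that no boundary contributions survive; once the Kodaira vanishing on $X$ is in place this is routine, but it is the step where an off-by-one in the range $0<i<n$ versus the length $2r-1$ of the Koszul complex could bite, so I would state the dimension inequality $n\geq 2r$ explicitly and check the two extreme terms of the complex by hand.
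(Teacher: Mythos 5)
Your proposal matches the paper's own proof: the paper likewise establishes $h^{0}(Y,\mathcal{O}_{Y}(2r))=\sum_{i=0}^{2r-1}(-1)^{i}\binom{2r-1}{i}\,h^{0}(\mathcal{O}_{X}(2r-i))$ by the Kodaira-vanishing argument of Lemma \ref{l} (your Koszul-complex chase is the same computation), observes that the displayed numerical hypothesis then becomes exactly the bound $h^{0}(Y,L)<\frac{(n-2r+1)^{n-2r+1}}{(n-2r+1)!}$ needed in Step $2$ of the proof of Theorem \ref{t1'}, and runs the remainder of that proof unchanged. The only slip is your claim that $n\geq 2r$ follows from $n\geq 3r-2$ and $r\geq 1$ (false for $r=1$); as the paper notes, $n>2r$ is instead forced by the numerical inequality itself.
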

\begin{proof}
We have
\[
h^{0}(Y,\mathcal{O}_{Y}(2r))=\sum_{i=0}^{2r-1}(-1)^{i}\cdot \binom{2r-1}{i} \cdot h^{0}(\mathcal{O}_{X}(2r-i))
\]
for any linear section $Y\subset X$ of codimension $2r-1$
by an argument similar to one in Lemma \ref{l}.
Therefore the first assumption is equivalent to
\[
h^{0}(Y,\mathcal{O}_{Y}(2r))<\frac{(n-2r+1)^{n-2r+1}}{(n-2r+1)!}
\]
for any such $Y$.
We have $n>2r$ after all.
We omit the rest of the proof.
\end{proof}

\section{The singular case}
Due to Liu and Zhuang \cite{LZ}, the result of Zhuang \cite{Z} is generalized to the singular case
(the notions of birational superrigidity and K-stability can be defined for $\Q$-Fano varieties).
Replacing the complete intersection assumption by the local complete intersection and projective normality, we prove:

\begin{thm}\label{t2}
For integers $\delta\geq -1$ and $r\geq 1$,
there exists a positive integer $n_{0}(r, \delta)$ depending only on $\delta$ and $r$ such that,
if $X\subset \P^{n+r}$ is a locally complete intersection projectively normal Fano variety of index $1$, codimension $r$ and dimension $n\geq n_{0}(r, \delta)$ such that
\begin{enumerate}
\item[(i)] $\dim\Sing(X)\leq \delta$;
\item[(ii)] every projective tangent cone of $X$ is a Fano complete intersection of index at least $4r+2\delta +2$ and is smooth in dimension $r+\delta$,
\end{enumerate}
then $X$ is birationally superrigid and K-stable.
\end{thm}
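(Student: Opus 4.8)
The plan is to mirror the proof of Theorem \ref{t1'}, isolating exactly where the complete intersection hypothesis was used and replacing it with the weaker local complete intersection plus projective normality assumptions, now also tracking the singularities via the parameters $\delta$ and the index/smoothness conditions on the projective tangent cones. First I would note that since $X$ is a locally complete intersection, the normal sheaf $N_{X/\P^{n+r}}$ is locally free of rank $r$, so the ramification locus $R(\pi_p)$ of the linear projection from a point $p\notin X$ still makes sense as the zero scheme of a section of $N_{X/\P^{n+r}}(-1)$; and one checks that the argument of Lemma \ref{l2} goes through essentially unchanged once $n\geq 3r-2$ (the Barth–Larsen and Zak-type statements only require $X$ smooth in small codimension, or can be adapted using that $\Sing(X)$ has bounded dimension $\delta$), so the condition $(\star)$ holds for closed subsets of dimension $\geq r+\delta$ rather than $\geq r$; correspondingly Proposition \ref{p1} yields $e_x(\alpha)\leq m$ away from a closed subset of dimension $< r(s+\delta+1)$ or similar. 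The hypotheses on the projective tangent cones are what let us run the local analysis at the singular points: at a point $x\in\Sing(X)$, the relevant local inequalities of de Fernex–Ein–Mustaţă type and the inversion of adjunction step must be replaced by statements about the projective tangent cone $\P(C_xX)$, which by assumption (ii) is a Fano complete intersection of large index, so Zhuang's multiplicity bounds and the log canonicity criteria apply to it.

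The key steps, in order, would be: (1) Establish that $\Pic(X)=\Z[\mathcal{O}_X(1)]$ and $-K_X=\mathcal{O}_X(1)$ for $n$ large relative to $r$ and $\delta$, using the relevant Lefschetz-type theorem valid for local complete intersections with bounded singular locus. (2) Verify condition $(\star)$ (suitably reinterpreted for the singular setting, i.e. $Z\cap R(\pi_p)\neq\emptyset$ for $Z$ of dimension $\geq r+\delta$) via the argument of Lemma \ref{l2}, which only needs $n\geq 3r-2$ and degeneration control on the singular locus. (3) Prove the multiplicity bound analogue of Proposition \ref{p1} in this setting; the residual intersection construction of Step 1 of Section 3 still works because $N_{X/\P^N}$ is locally free, and the dimension counts now carry an extra $+\delta$. (4) Run Steps 1–3 of the proof of Theorem \ref{t1'}: cut down by general linear sections to reduce to a low-dimensional situation, apply the klt/lc criteria — but at singular points invoke assumption (ii) and the results of Liu–Zhuang \cite{LZ} for the projective tangent cone, whose index being at least $4r+2\delta+2$ is exactly what makes the numerical inequality $h^0(Y,\mathcal O_Y(\text{twist})) < \frac{(\dim Y)^{\dim Y}}{(\dim Y)!}$ work after the linear-section reduction. (5) For K-stability, bound the alpha invariant $\alpha(X)>\tfrac12$ as in Theorem \ref{t1'}, again splitting into the smooth-point case (handled as before) and the singular-point case (handled via the tangent cone and \cite{LZ}). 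Throughout, $n_0(r,\delta)$ is chosen large enough that all the dimension inequalities — the Lefschetz bound, $n\geq 3r-2$ shifted by $\delta$, and the combinatorial inequality comparing a binomial-type sum of $h^0$'s against $(n-2r+1)^{n-2r+1}/(n-2r+1)!$ after accounting for $\delta$ extra cuts — are simultaneously satisfied.

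The main obstacle I expect is step (4) at the singular points: making the inversion of adjunction and the de Fernex–Ein–Mustaţă multiplicity-to-log-canonicity passage work when $x\in\Sing(X)$. At a smooth point these are classical, but at a singular point one must pass to the projective tangent cone, control how the multiplicity of the cycle $\mathcal M$ (or of $D_1\cap D_2$) restricts to it, and ensure that the Fano complete intersection structure with index $\geq 4r+2\delta+2$ together with smoothness in dimension $r+\delta$ gives enough room to apply Zhuang's and Liu–Zhuang's criteria. This is precisely the content that \cite{LZ} supplies in the complete intersection case, so the real work is checking that our projective normality hypothesis — via the Lemma \ref{l}-type argument bounding $h^0$ of linear sections — feeds correctly into their framework, and that the numerical inequality in the hypothesis of the theorem (once translated through $2r-1+\delta$ general linear sections and the tangent cone index) is the correct one. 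Tracking the arithmetic of these two bookkeeping parameters ($\delta$ for singular locus dimension, and the index shift $4r+2\delta+2$) consistently across all the dimension counts is where an error is most likely to creep in, but no genuinely new idea beyond \cite{Z}, \cite{LZ}, and the present paper's Sections 2–4 should be required.
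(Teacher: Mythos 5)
Your overall strategy coincides with the paper's: the proof given there is itself only a sketch that adapts Sections 2--4 to the singular setting and then defers to Liu--Zhuang \cite{LZ} for the pair-theoretic analysis at singular points via hypothesis (ii) on the projective tangent cones, exactly as you propose. The one place where you genuinely diverge is the bookkeeping for $\Sing(X)$ in the multiplicity bound. You propose to raise the dimension threshold in condition $(\star)$ from $r$ to $r+\delta$ and accept an exceptional set of dimension roughly $r(s+\delta+1)$; the paper instead keeps the threshold at $r$ but restricts the condition to closed subsets $Z$ \emph{disjoint from} $\Sing(X)$ (this is its condition $(\star\star)$), arranges the multiple residual intersections to avoid $\Sing(X)$, and obtains the sharper exceptional dimension $rs+\delta$ in Proposition \ref{p2}. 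The paper's formulation is preferable on two counts. First, your claim that ``the argument of Lemma \ref{l2} goes through essentially unchanged'' hides the real work: at a singular point the tangent variety is not the right object, and Lemma \ref{l5} must replace it by the variety of tangent stars and the J-ramification locus in Zak's sense, invoke the connectedness theorem and Zak's linear normality theorem \emph{for singular varieties} (whence the extra requirement $n\geq 2r-1+\delta$), and use the Barth--Larsen theorem for local complete intersections to see that the ramification class is a positive multiple of the degree; this is where the lci hypothesis is essential and where a purely ``smooth in small codimension'' heuristic would not suffice. Second, your cruder exceptional bound would force deeper general linear section cuts in the analogue of Step 2 of Theorem \ref{t1'} and hence a worse $n_0(r,\delta)$, though since the theorem only asserts the existence of some $n_0(r,\delta)$ this costs nothing but constants. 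Everything else --- the Picard group computation via Barth--Larsen for lci varieties together with the universal coefficient theorem, the role of the index bound $4r+2\delta+2$ on the tangent cones in the numerical inequality, and the reduction of K-stability to $\alpha(X)>\frac{1}{2}$ --- matches the paper.
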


Let $X\subset \P^{n+r}$ be a non-degenerate projective variety of dimension $n$ and codimension $r$.
For a closed point $p\in \P^{n+r}$ not contained in $X$,
we define the ramification locus $R(\pi_{p})$ of the restriction $\pi_{p}\colon X\rightarrow \P^{n+r-1}$  of the linear projection from $p$
as the zero scheme of the section of the twisted normal sheaf $\mathcal{N}_{X/\P^{n+r}}(-1)$ associated to $p$.
We consider the following condition $(\star\star)$:
given any closed point $p\in \P^{n+r}$ not contained in $X$,
we have $Z\cap R(\pi_{p})\neq \emptyset$ for any closed subset $Z\subset X$ of dimension $\geq r$ disjoint from $\Sing(X)$.

\begin{lem}\label{l4}
Assume that $X$ is a complete intersection. 
Then the condition $(\star\star)$ is satisfied.
\end{lem}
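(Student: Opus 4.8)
The plan is to follow the proof of Lemma~\ref{l1} in spirit: for a complete intersection the ramification locus $R(\pi_{p})$ is again cut out globally on $X$ by $r$ hypersurface sections, so $(\star\star)$ reduces to a dimension count with Cartier divisors, and the point is that this count is insensitive to the singularities of $X$. In fact I expect to prove the stronger statement that $Z\cap R(\pi_{p})\neq\emptyset$ for \emph{every} closed subset $Z\subset X$ of dimension $\geq r$, so that the hypothesis that $Z$ be disjoint from $\Sing(X)$ plays no role here; it will matter only in the later, merely locally complete intersection, lemmas.

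First I would record that a non-degenerate complete intersection $X\subset\P^{n+r}$ of multidegree $(d_{1},\dots,d_{r})$ has $d_{i}\geq 2$ for all $i$: a degree-one member of the regular sequence cutting out $X$ would be a linear form vanishing on $X$, contradicting non-degeneracy. Hence the conormal sheaf is $\mathcal{I}_{X}/\mathcal{I}_{X}^{2}\cong\bigoplus_{i=1}^{r}\mathcal{O}_{X}(-d_{i})$, which is locally free, so $\mathcal{N}_{X/\P^{n+r}}(-1)\cong\bigoplus_{i=1}^{r}\mathcal{O}_{X}(d_{i}-1)$ is a direct sum of ample line bundles. The section of $\mathcal{N}_{X/\P^{n+r}}(-1)$ associated to $p$ then decomposes as $(s_{1},\dots,s_{r})$ with $s_{i}\in H^{0}(X,\mathcal{O}_{X}(d_{i}-1))$ (concretely, $s_{i}$ is the restriction to $X$ of the polar of the $i$-th defining form with respect to $p$), and $R(\pi_{p})=\bigcap_{i=1}^{r}Z(s_{i})$. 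For each $i$ either $s_{i}\equiv 0$ and $Z(s_{i})=X$, or $Z(s_{i})$ is an effective Cartier divisor on $X$; in the latter case it is nonempty because $\mathcal{O}_{X}(d_{i}-1)$ is ample on the positive-dimensional projective variety $X$.

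Then I would fix a closed $Z\subset X$ with $\dim Z\geq r$ and, after replacing $Z$ by a component of maximal dimension, assume it is a subvariety. Intersecting $Z$ in turn with $Z(s_{1}),\dots,Z(s_{r})$, at each step I replace an irreducible projective variety $W$ by a component of $W\cap Z(s_{i})$: if $W\subset Z(s_{i})$ or $Z(s_{i})=X$ this is $W$ itself, and otherwise $s_{i}|_{W}$ is a nonzero section of the ample line bundle $\mathcal{O}_{X}(d_{i}-1)|_{W}$, so $W\cap Z(s_{i})$ is a nonempty effective Cartier divisor on $W$ and by Krull's principal ideal theorem each of its components has dimension $\dim W-1$. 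After $r$ steps I am left with a nonempty closed subset of $Z\cap\bigcap_{i}Z(s_{i})=Z\cap R(\pi_{p})$, its dimension being at least $\dim Z-r\geq 0$; this gives $(\star\star)$. There is no serious obstacle: the argument is bookkeeping identical to Lemma~\ref{l1}, and the only thing to watch is that each $Z(s_{i})$ genuinely drops the dimension by at most one and is never vacuously empty, which is precisely where non-degeneracy ($d_{i}\geq 2$, hence ampleness of $\mathcal{O}_{X}(d_{i}-1)$) enters and where no smoothness of $X$ is used.
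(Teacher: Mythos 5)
Your proposal is correct and follows the same route as the paper: the paper's proof of Lemma~\ref{l4} simply refers back to Lemma~\ref{l1}, whose argument is that for a complete intersection $R(\pi_{p})$ is globally cut out on $X$ by $r$ hypersurface sections, so intersecting a closed subset of dimension $\geq r$ with them $r$ times stays nonempty. Your write-up is a careful expansion of that one-line argument (identifying $\mathcal{N}_{X/\P^{n+r}}(-1)$ as a sum of ample line bundles via $d_{i}\geq 2$ and handling the $s_{i}\equiv 0$ case), and correctly observes that smoothness of $Z$'s ambient points is never used.
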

\begin{proof}
The proof is the same as in Lemma \ref{l1}.
\end{proof}


\begin{lem}\label{l5}
Assume that $X$ is locally complete intersection and 
\[n\geq \max\left\{3r-2, 2r-1+\delta\right\},\]
 where $\delta=\dim \Sing(X)$.
Then the condition $(\star\star)$ is satisfied.
\end{lem}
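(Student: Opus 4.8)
The plan is to mimic the proof of Lemma \ref{l2}, but to work locally-analytically (or étale-locally) around points of $X\setminus\Sing(X)$, using that for a locally complete intersection the twisted normal sheaf $\mathcal{N}_{X/\P^{n+r}}(-1)$ is a vector bundle of rank $r$ on the smooth locus. First I would show, exactly as in Lemma \ref{l2}, that $\Tan(X)=\Sec(X)=\P^{n+r}$: the equality $\Tan(X)=\Sec(X)$ holds once $n\geq r$ by Fulton–Hansen \cite[Corollary 3.4.5]{L}, and $\Sec(X)=\P^{n+r}$ holds once $n\geq 2r$ by Zak's theorem on linear normality \cite[Corollary 3.4.26]{L}; both are implied by $n\geq 3r-2\geq 2r$ when $r\geq 1$ (the case $r=1$, where $X$ is a hypersurface, being covered already by Lemma \ref{l4}). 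Hence for general $p$ the section of $\mathcal{N}_{X/\P^{n+r}}(-1)$ cutting out $R(\pi_p)$ does not vanish identically, and $R(\pi_p)$ has pure dimension $n-r$ on the smooth locus.

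Next I would pass to numerical/cohomological information. By the Barth–Larsen theorem \cite[Theorem 3.2.1]{L}, under $n\geq 3r-2$ the restriction maps $H^{2i}(\P^{n+r},\Z)\to H^{2i}(X,\Z)$ are isomorphisms for $0\leq i\leq r-1$, so $H^{2i}(X,\Z)$ is generated by $c_1(\mathcal{O}_X(1))^i\cap[X]$ in that range. The class $[R(\pi_p)]\cap\text{(ambient cycle)}$ restricted to the smooth locus is computed from $c_r(\mathcal{N}_{X/\P^{n+r}}(-1))$ and, expanding as in Lemma \ref{l2} via \cite[Example 3.2.2]{F} and the self-intersection formula \cite[Corollary 6.3]{F}, equals $a\cdot c_1(\mathcal{O}_X(1))^r\cap[X]$ for some positive integer $a$, at least after restricting to an open set whose complement has small dimension. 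Then for any $Z\subset X$ of dimension $\geq r$ disjoint from $\Sing(X)$, the intersection number $\deg(Z\cdot[R(\pi_p)])=a\cdot\deg Z\neq 0$, forcing $Z\cap R(\pi_p)\neq\emptyset$; this is where the hypothesis $n\geq 2r-1+\delta$ enters, guaranteeing that a dimension-$\geq r$ subvariety avoiding the $\delta$-dimensional singular locus still meets the expected $(n-r)$-dimensional ramification cycle inside the smooth part, so the numerical computation governs the geometry.

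The main obstacle I anticipate is making the Chern-class computation rigorous across the singularities: $\mathcal{N}_{X/\P^{n+r}}(-1)$ is only a sheaf (not a bundle) at points of $\Sing(X)$, and the formula $[R(\pi_p)]=c_r(\cdot)\cap[X]$ from \cite[Proposition 14.1]{F} needs the section to be regular, which here I can only assert on $X\setminus\Sing(X)$. The fix is to intersect only with $Z$ that are disjoint from $\Sing(X)$ — which is exactly what $(\star\star)$ asks — and to work with cycle classes supported on $Z$ and on $R(\pi_p)$, where the relevant neighborhoods in $X$ are smooth; the global positivity of $a$ follows from the Barth–Larsen identification, which is insensitive to the singularities since $\codim_X\Sing(X)=n-\delta\geq 2r-1$ is large. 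One then invokes $n\geq 2r-1+\delta$ to control the dimension of $Z\cap\Sing(X)=\emptyset$ and of various auxiliary loci (joins, cones) appearing in the dimension count, so that the argument of Lemma \ref{l2} carries over verbatim on the complement of $\Sing(X)$.
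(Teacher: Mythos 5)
Your plan follows the right general shape (reduce to a degree computation for $c_{r}(N_{X/\P^{n+r}}(-1))\cap [Z]$ via a Barth--Larsen-type statement), but two of its load-bearing steps invoke theorems that are only valid for smooth $X$, and this is exactly where the singular case requires new input. First, the equalities $\Tan(X)=\Sec(X)$ (Fulton--Hansen, \cite[Corollary 3.4.5]{L}) and $\Sec(X)=\P^{n+r}$ for $n\geq 2r$ (Zak's linear normality, \cite[Corollary 3.4.26]{L}) are statements about smooth varieties; they do not apply here, and the tangent variety is not even the right object at singular points. The paper instead works with the variety of tangent stars $\Tan'(X)$, proves $\Tan'(X)=\Sec(X)$ by the Fulton--Hansen connectedness theorem in Zak's form \cite[Chapter I, Theorem 1.4]{Za}, and gets $\Sec(X)=\P^{n+r}$ from Zak's linear normality theorem \emph{for singular varieties} \cite[Chapter II, Theorem 2.1]{Za}, which is precisely what consumes the hypothesis $n\geq 2r-1+\delta$. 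Your proposal misassigns the role of that hypothesis to a later dimension count; as written, your first paragraph has no valid justification for $\Sec(X)=\P^{n+r}$, so you cannot even conclude that $R(\pi_{p})$ is non-empty.

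Second, restricting attention to the smooth locus does not suffice to run the intersection-number argument: to conclude $\deg\bigl(c_{r}(N_{X/\P^{n+r}}(-1))\cap[Z]\bigr)=a\cdot\deg Z\neq 0$ you need the identity $[R(\pi_{p})]=c_{r}(N_{X/\P^{n+r}}(-1))\cap[X]$ to hold globally, which requires the defining section to be regular, i.e.\ $\dim R(\pi_{p})=n-r$ everywhere --- including over $\Sing(X)$, where a priori $R(\pi_{p})$ could have excess components. The paper handles this by comparing $R(\pi_{p})$ with the J-ramification locus $R'(\pi_{p})$ (they agree on $X^{sm}$ and $R(\pi_{p})\supseteq R'(\pi_{p})$), noting that the discrepancy is supported on $\Sing(X)$ of dimension $\delta\leq n-r$, and using the local complete intersection hypothesis to see that $R(\pi_{p})$ is locally cut out by $r$ equations, hence has dimension $\geq n-r$ everywhere; your proposal never uses the lci hypothesis for this purpose. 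Finally, the Barth--Larsen theorem you cite (\cite[Theorem 3.2.1]{L}) is the cohomological statement for smooth subvarieties; for singular lci $X$ one must use the homological version for local complete intersections \cite[Corollary 3.5.13]{L}, which gives $H_{2i}(X,\Z)=H_{2i}(\P^{n+r},\Z)$ for $0\leq i\leq r-1$ and hence that the class of a subvariety of dimension $\leq r-1$ is determined by its degree --- the form of the statement actually needed to identify the constant $a$ and show $a\neq 0$ via a general linear section.
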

\begin{proof}
We denote by $\Tan'(X)$ the variety of tangent stars of $X$ (see \cite[Chapter I, Definition 1.2]{Za} for the definition).
We prove $\Tan'(X)=\P^{n+r}$.
We have $\Tan'(X)=\Sec(X)$ as long as $n\geq r$ by the connectedness theorem of Fulton and Hansen \cite[Chapter I, Theorem 1.4]{Za}.
On the other hand, we have $\Sec(X)=\P^{n+r}$ as long as $n\geq 2r-1+\delta$ by Zak's theorem on linear normality for singular varieties \cite[Chapter II, Theorem 2.1]{Za}.
Therefore we have the desired equality of sets under our assumption.

For a closed point $p\in \P^{n+r}$ not contained in $X$, we denote by $R'(\pi_{p})$ be the J-ramification locus of the restriction of the linear projection from $p$
(see \cite[Chapter II, Section 1]{Za} for the definition and property of unramified morphisms in the sense of Johnson, or J-unramified morphisms).
Then $R'(\pi_{p})\neq \emptyset$, and $\dim R'(\pi_{p})\leq n-r$ for general $p$.
We prove that $R(\pi_{p})\neq \emptyset$, and $\dim R(\pi_{p})=n-r$ for general $p$.
By definition of $R(\pi_{p})$ and $R'(\pi_{p})$, we have
\[
R(\pi_{p})\supseteq R'(\pi_{p}),\, R(\pi_{p})\cap X^{sm}=R'(\pi_{p})\cap X^{sm}.
\]
Thus the complement in $R(\pi_{p})$ of $R'(\pi_{p})$ is supported on $\Sing(X)$,
while we have $\dim \Sing(X)=\delta\leq n-r$ by the assumption.
Now it is enough to observe that $R(\pi_{p})$ is locally defined by $r$ equations in $X$ and we have $\dim R(\pi_{p})\geq n-r$.
It follows that $R(\pi_{p})$ is defined by a regular section for general $p$.
We have
\[
[R(\pi_{p})]=c_{r}(N_{X/\P^{n+r}}(-1))\cap [X]
\]
by \cite[Proposition 14.1]{F}.

To check the condition $(\star\star)$, it is enough to prove that
\[
c_{r}(N_{X/\P^{n+r}}(-1))\cap [Z]\neq 0
\]
for any closed subvariety $Z\subset X$ of dimension $r$.
By the Barth-Larsen theorem for locally complete intersection varieties \cite[Corollary 3.5.13]{L}, 
we have
\[
H_{2i}(X,\Z)=H_{2i}(\P^{n+r}, \Z) \text{ for any }0\leq i\leq r-1.
\]
Therefore the homology class of a subvariety of $X$ of dimension $\leq r-1$ is uniquely determined by its degree.
Combined with the self-intersection formula \cite[Corollary 6.3]{F},
it follows that the function
\[
Z_{r}(X)\rightarrow \Z,\, Z \mapsto c_{r}(N_{X/\P^{n+r}}(-1))\cap [Z]
\]
is $a\cdot \deg Z$, where $a$ is a constant not depending on $Z$.
Taking a general linear section of dimension $r$, we have $a\neq 0$.
The proof is done.
\end{proof}

\begin{lem}\label{l6}
Assume $r=2$.
Then the condition $(\star\star)$ is satisfied.
\end{lem}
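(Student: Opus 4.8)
The plan is to follow the proof of Lemma \ref{l3}, combined with the reduction to the smooth locus that was used in Lemma \ref{l5}. Fix a closed point $p \in \P^{n+r}$ not contained in $X$ and a closed subset $Z \subset X$ with $\dim Z \geq r = 2$ and $Z \cap \Sing(X) = \emptyset$; the goal is to show $Z \cap R(\pi_p) \neq \emptyset$.

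First I would note that $Z$ lies in the smooth locus $X^{sm}$, since it is disjoint from $\Sing(X)$. As recorded in the proof of Lemma \ref{l5}, the zero scheme $R(\pi_p)$ of the section of $\mathcal{N}_{X/\P^{n+r}}(-1)$ associated to $p$ coincides with the J-ramification locus $R'(\pi_p)$ over $X^{sm}$, namely $R(\pi_p) \cap X^{sm} = R'(\pi_p) \cap X^{sm}$. Hence $Z \cap R(\pi_p) = Z \cap R'(\pi_p)$, and it is enough to prove $Z \cap R'(\pi_p) \neq \emptyset$.

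Next I would apply \cite[Chapter II, Corollary 2.5]{Za}, exactly as in the smooth case treated in Lemma \ref{l3}. That corollary, phrased in terms of tangent stars and J-unramified morphisms and hence valid for possibly singular non-degenerate varieties, asserts that in codimension $2$ the J-ramification locus of the linear projection from any point $p \notin X$ meets every closed subset of $X$ of dimension $\geq 2$. Applying it to $X$ and $Z$ yields $Z \cap R'(\pi_p) \neq \emptyset$, completing the proof.

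The only delicate point is to confirm that Corollary 2.5 of \cite{Za} applies in the singular setting and, crucially, for every $p \notin X$ rather than merely a general one; this uniformity over $p$ is the phenomenon special to codimension $2$ that makes the lemma work, and in contrast to Lemma \ref{l5} it removes the need for any dimension or cohomology computation. Aside from this, the argument is just the routine identification of $R(\pi_p)$ with $R'(\pi_p)$ along $X^{sm}$.
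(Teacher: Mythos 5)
Your proposal is correct and takes essentially the same route as the paper, which proves this lemma by a direct appeal to \cite[Chapter II, Corollary 2.5]{Za}, exactly as in Lemma \ref{l3}. The extra step you spell out --- identifying $R(\pi_p)$ with the J-ramification locus $R'(\pi_p)$ along $X^{sm}$, so that Zak's statement applies to the subset $Z$ disjoint from $\Sing(X)$ --- is left implicit in the paper but is consistent with the identity $R(\pi_p)\cap X^{sm}=R'(\pi_p)\cap X^{sm}$ recorded in the proof of Lemma \ref{l5}.
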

\begin{proof}
It follows from \cite[Chapter II, Corollary 2.5]{Za}.
\end{proof}

\begin{prop}\label{p2}
Let $X\subset \P^{N}$ be a non-degenerate projective variety of codimension $r$.
Assume that $X$ satisfies the condition $(\star\star)$.
Let $\alpha$ be an effective cycle on $X$ such that 
\[\alpha= m \cdot c_{1}(\mathcal{O}_{X}(1))^{s}\cap [X] \in N_{N-r-s}(X).\]
Then we have $e_{x}(\alpha)\leq m$ for any closed point $x\in X$ away from a closed subset of dimension $\leq rs+\delta$,
where $\delta = \dim \Sing(X)$ and we use the convention $\dim(\emptyset)=-1$.
\end{prop}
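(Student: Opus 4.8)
The plan is to reduce the proposition to the smooth case, Proposition \ref{p1}, by cutting $X$ with a general linear subspace $\Lambda\subset\P^{N}$ of codimension $\delta+1$. Since $\dim\Sing(X)=\delta$, a general such $\Lambda$ is disjoint from $\Sing(X)$, so that $X':=X\cap\Lambda$ is, by Bertini, a smooth non-degenerate projective variety of codimension $r$ in $\Lambda\cong\P^{N-\delta-1}$.

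First I would dispose of the trivial cases: we may assume $s\geq1$ and $\alpha\neq0$, and, since the Samuel multiplicity $e_{x}(\alpha)$ is upper semicontinuous in $x$, it suffices to prove $e_{Z}(\alpha)\leq m$ for every subvariety $Z\subset X$ of dimension $rs+\delta+1$. If $rs+\delta+1>n$ there is no such $Z$; if $rs+\delta+1=n$ then $Z=X$ and $e_{X}(\alpha)=0$ because $\alpha$ has codimension $s\geq1$; so we may assume $rs+\delta+1<n$, and then every such $Z$ satisfies $Z\not\subset\Sing(X)$ (as $rs\geq1$) and $\dim X'=n-\delta-1\geq2$, so $X'$ is also irreducible.

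Now fix such a $Z$ and a general $\Lambda$ as above, and set $\alpha':=\alpha\cdot[\Lambda]$; this is an effective cycle on $X'$ with $[\alpha']=m\,c_{1}(\mathcal{O}_{X'}(1))^{s}\cap[X']$ in $N^{s}(X')$, while $Z\cap\Lambda$ has pure dimension $rs$. The crucial point is that $X'$ satisfies the condition $(\star)$: given $p'\in\Lambda\setminus X'$ we have $p'\notin X$, and given a closed $W\subset X'$ with $\dim W\geq r$ we have $W\cap\Sing(X)=\emptyset$, so $(\star\star)$ gives $W\cap R(\pi_{p'})\neq\emptyset$ for $\pi_{p'}\colon X\to\P^{N-1}$; since $W$ lies in the smooth locus, $\mathbb{T}_{x}X'=\mathbb{T}_{x}X\cap\Lambda$ for every $x\in X'$, and $p'\in\Lambda$, the ramification loci agree along $X'$, i.e. $W\cap R(\pi_{p'})=W\cap R(\pi^{X'}_{p'})$, which is exactly $(\star)$ for $X'$. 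Proposition \ref{p1} applied to $X'$ and $\alpha'$ then provides a closed subset $S'\subset X'$ of dimension $<rs$ with $e_{x}(\alpha')\leq m$ off $S'$. Since $Z\cap\Lambda$ has pure dimension $rs$, a general point $x\in Z\cap\Lambda$ avoids $S'$; by the irreducibility of the incidence variety $\{(\Lambda,x):x\in Z\cap\Lambda\}$, such an $x$ is moreover a general point of $Z$ with $\Lambda$ general among the linear subspaces through it, so $e_{x}(\alpha')=e_{x}(\alpha\cdot[\Lambda])=e_{x}(\alpha)=e_{Z}(\alpha)$, using that a general linear section through a point preserves the Samuel multiplicity there and that the multiplicity of $\alpha$ is generic along $Z$. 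Hence $e_{Z}(\alpha)\leq m$, as required. (When $\delta=-1$ one has $\Lambda=\P^{N}$ and the argument is just Proposition \ref{p1}.)

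The step I expect to be the main obstacle is the descent of $(\star\star)$ on $X$ to $(\star)$ on $X'$: it hinges on identifying, set-theoretically, the ramification locus of the linear projection of $X'$ with the trace on $X'$ of the ramification locus of the linear projection of $X$, which in turn uses the transversality $\mathbb{T}_{x}X'=\mathbb{T}_{x}X\cap\Lambda$ along the smooth locus together with the fact that the projection centre $p'$ lies in $\Lambda$. The remaining ingredients — effectivity and the numerical class of $\alpha\cdot[\Lambda]$, smoothness, irreducibility and non-degeneracy of $X'$, and the invariance $e_{x}(\alpha\cdot[\Lambda])=e_{x}(\alpha)$ under a general linear section through $x$ — are standard but should be recorded carefully.
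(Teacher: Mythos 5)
Your argument is correct in outline but follows a genuinely different route from the paper. The paper's proof reduces to subvarieties $Z$ of dimension $rs$ \emph{disjoint from} $\Sing(X)$ and then reruns the residual-intersection construction of Lemma \ref{mcbmri} directly on the singular variety $X$, arranging by genericity of the projection centres and by $(\star\star)$ that the moving cycles $R_j$ avoid $\Sing(X)$; the extra $+\delta$ in the exceptional set is the price of not controlling anything near the singular locus. You instead cut by a general $\Lambda$ of codimension $\delta+1$ and reduce outright to Proposition \ref{p1} on the smooth variety $X'=X\cap\Lambda$. The heart of your reduction --- that $(\star\star)$ on $X$ descends to $(\star)$ on $X'$ via the set-theoretic identity $R(\pi^{X'}_{p'})=X'\cap R(\pi^{X}_{p'})$, valid because $p'\in\Lambda$ and $\mathbb{T}_xX'=\mathbb{T}_xX\cap\Lambda$ at every point of $X'$ for general $\Lambda$ --- is sound and does not appear in the paper; it is the genuinely new ingredient that makes the reduction work. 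Your route buys modularity (no need to redo the dimension counts of Lemma \ref{mcbmri} in the singular setting), at the cost of two points that deserve care. First, the claim $[\alpha\cdot\Lambda]=m\,c_1(\mathcal{O}_{X'}(1))^s\cap[X']$ in $N^s(X')$: the refined Gysin map $\Lambda^!$ is only known to respect rational equivalence and Chern-class operations, not numerical equivalence on a singular $X$, so you should either invoke that in the application $\alpha$ is an intersection of divisors in $|\mathcal{O}_X(m_i)|$ (whence the identity already holds in $CH_*(X)$ and descends), or reformulate the hypothesis; the paper's own one-line proof glosses over an analogous issue. Second, the simultaneous genericity of $\Lambda$ in the Grassmannian and of $\Lambda$ among subspaces through $x$ (your incidence-variety step) can be dispensed with entirely: for the contradiction you only need $e_y(\alpha\cdot\Lambda)\geq e_y(\alpha)\geq e_Z(\alpha)$ for every closed point $y\in Z\cap\Lambda$, and the first inequality holds for \emph{any} $\Lambda$ meeting $\alpha$ properly at $y$ since multiplicity does not decrease under linear sections through the point, while the second is upper semicontinuity; then all of $Z\cap\Lambda$, of pure dimension $rs$, would lie in the $(<rs)$-dimensional exceptional set of Proposition \ref{p1}, which is absurd.
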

\begin{rem}
The complete intersection case is proved by Pukhlikov \cite[Proposition 5]{P4} and the author \cite[Proposition 2.1]{S}.
\end{rem}
\begin{proof}
We take $\alpha$ as in the statement.
It is enough to prove that $e_{Z}(\alpha)\leq m$ for any closed subvariety $Z\subset X$ of dimension $rs$ disjoint from $\Sing(X)$.
We move $Z$ by multiple residual intersections so that the residual intersections avoid $\Sing(X)$.
The rest of the proof is similar.
\end{proof}

\begin{proof}[Proof of Theorem \ref{t2}]
The proof is essentially the same as in \cite[Theorem 1.3]{LZ}.
We use the Barth-Larsen theorem for locally complete intersection varieties \cite[Corollary 3.5.13]{L} combined with the universal coefficient theorem to compute the Picard group.
The condition $(\star\star)$ is satisfied by Lemma \ref{l5} when $n$ is large enough for fixed $\delta$ and $r$ (or by Lemma \ref{l6} in the case $r=2$).
Then Proposition \ref{p2} is used to bound the dimension of the non-canonical, non-klt or non-lc locus of certain pairs on $X$
as Proposition \ref{p1} in the proof of Theorem \ref{t1}.
We refer the reader to \cite{LZ} for the details.
\end{proof}


\begin{thebibliography}{99}
\bibitem{B}Birkar, C.: {\it Singularities of linear systems and boundedness of Fano varieties}, 2016. arXiv:1609.05543
\bibitem{C1}Cheltsov, I. A.: {\it On a smooth four-dimensional quintic}, Mat. Sb. {\bf 191} (2000), no. 9, 139--160; translation in Sb. Math. {\bf 191} (2000), no.~9-10, 1399--1419. 
\bibitem{C2}Cheltsov, I. A.: {\it Nonrationality of a four-dimensional smooth complete intersection of a quadric and a quadric not containing a plane}, (Russian) Mat. Sb. {\bf 194} (2003), no. 11, 95--116; translation in Sb. Math. {\bf 194} (2003), no.~11-12, 1679--1699.
\bibitem{C3}Cheltsov, I. A.: {\it Nonexistence of elliptic structures on general Fano complete intersections of index one}, Vestnik Moskov. Univ. Ser. I Mat. Mekh. {\bf 2005}, no. 3, 50--53, 71; translation in Moscow Univ. Math. Bull. {\bf 60} (2005), no.~3, 30--33 (2006).
\bibitem{CS}Cheltsov, I. A., Shramov, K. A. :{\it Log-canonical thresholds for smooth Fano threefolds}, Russian Math. Surveys {\bf 63} (2008), no.~5, 859--958; translated from Uspekhi Mat. Nauk {\bf 63} (2008), no. 5(383), 73--180.
\bibitem{CDS1}Chen, X., Donaldson, S., Sun, S.: {\it K\"{a}hler-Einstein metrics on Fano manifolds. I: Approximation of metrics with cone singularities}, J. Amer. Math. Soc. {\bf 28} (2015), no.~1, 183--197.
\bibitem{CDS2}Chen, X., Donaldson, S., Sun, S.: {\it K\"{a}hler-Einstein metrics on Fano manifolds. II: Limits with cone angle less than $2\pi$}, J. Amer. Math. Soc. {\bf 28} (2015), no.~1, 199--234.
\bibitem{CDS3}Chen, X., Donaldson, S., Sun, S.: {\it K\"{a}hler-Einstein metrics on Fano manifolds. III: Limits as cone angle approaches $2\pi$ and completion of the main proof}, J. Amer. Math. Soc. {\bf 28} (2015), no.~1, 235--278. 
\bibitem{dF1}de Fernex, T.: {\it Birationally rigid hypersurfaces}, Invent. Math. {\bf 192} (2013), no.~3, 533--566. 
\bibitem{dF2}de Fernex, T.: {\it Erratum to: Birationally rigid hypersurfaces}, Invent. Math. {\bf 203} (2016), no.~2, 675--680.(2016),
\bibitem{dFEM1}de Fernex, T., Ein, L., Musta\c t\u a, M.: {\it Multiplicities and log canonical threshold}, J. Algebraic Geom. {\bf 13} (2004), no.~3, 603--615. 
\bibitem{dFEM2}de Fernex, T., Ein, L., Musta\c t\u a, M.: {\it Bounds for log canonical thresholds with applications to birational rigidity}, Math. Res. Lett. {\bf 10} (2003), no.~2-3, 219--236. 
\bibitem{EM}Ein, L., Musta\c t\v a, M.: {\it Inversion of adjunction for local complete intersection varieties}, Amer. J. Math. {\bf 126} (2004), no.~6, 1355--1365. 
\bibitem{Fuj}Fujita, K.: {\it K-stability of Fano manifolds with not small alpha invariants}, 2016, To appear in J. Inst. Math. Jussieu, arXiv:1602.01305.
\bibitem{F}Fulton, W.: {\it Intersection theory}, second edition, Ergebnisse der Mathematik und ihrer Grenzgebiete. 3. Folge. A Series of Modern Surveys in Mathematics, 2, Springer, Berlin, 1998.
\bibitem{G}Grothendieck, A.: {\it Cohomologie locale des faisceaux coh\'erents et th\'eor\`emes de Lefschetz locaux et globaux $(SGA$ $2)$}, North-Holland, Amsterdam, 1968.
\bibitem{IM}Iskovskih, V. A., Manin, Ju. I.: {\it Three-dimensional quartics and counterexamples to the L\"uroth problem}, Mat. Sb. (N.S.) {\bf 86(128)} (1971), 140--166.
\bibitem{KOW}Kim, I. -K., Okada, T., Won, J.: {\it Alpha invariants of birationally rigid Fano three-folds}, Int. Math. Res. Not. IMRN {\bf 2018}, no.~9, 2745--2800.
\bibitem{K1}Koll\'ar, J.: {\it Singularities of pairs}, in {\it Algebraic geometry---Santa Cruz 1995}, 221--287, Proc. Sympos. Pure Math., 62, Part 1, Amer. Math. Soc., Providence, RI. 
\bibitem{K2}Koll\'ar, J.: {\it The rigidity theorem of Fano--Segre--Iskovskikh--Manin--Corti--Pukhlikov--Cheltsov--de Fernex--Ein--Musta\c t\u a--Zhuang}, 2018, arXiv:1807.00863.
\bibitem{L}Lazarsfeld, R.: {\it Positivity in algebraic geometry. I}, results of mathematics and its border areas. 3rd episode. A Series of Modern Surveys in Mathematics, 48, Springer-Verlag, Berlin, 2004.
\bibitem{LZ}Liu, Y., Zhuang, Z.: {\it Birational superrigidity and K-stability of singular Fano complete intersections}, 2018, to appear in Int. Math. Res. Not. IMRN, arXiv:1803.08871.
\bibitem{OO}Odaka, Y., Okada, T.: {\it Birational superrigidity and slope stability of Fano manifolds}, Math. Z. {\bf 275} (2013), no.~3-4, 1109--1119.
\bibitem{P1}Pukhlikov, A. V.: {\it Birational isomorphisms of four-dimensional quintics}, Invent. Math. 87 (1987), no. 2, 303--329.
\bibitem{P2}Pukhlikov, A. V.: {\it Birational automorphisms of Fano hypersurfaces}, Invent. Math. {\bf 134} (1998), no. 2, 401-426.
\bibitem{P3}Pukhlikov, A. V.: {\it Birationally rigid Fano complete intersections}, Crelle J. f\" ur die reine und angew. Math. {\bf 541} (2001), 55-79.
\bibitem{P4}Pukhlikov, A. V.: {\it Birationally rigid Fano hypersurfaces}, Izv. Ross. Akad. Nauk Ser. Mat. {\bf 66} (2002), no. 6, 159--186; translation in Izv. Math. {\bf 66} (2002), no.~6, 1243--1269. 
\bibitem{P5}Pukhlikov, A. V.: {\it Birationally rigid complete intersections of quadrics and cubics}, Izv. Ross. Akad. Nauk Ser. Mat. {\bf 77} (2013), no. 4, 161--214; translation in Izv. Math. {\bf 77} (2013), no.~4, 795--845. 
\bibitem{P6}Pukhlikov, A. V.: {\it Birationally rigid Fano complete intersections. II}, J. Reine Angew. Math. {\bf 688} (2014), 209--218.
\bibitem{SZ}Stibitz, C., Zhuang, Z.: {\it K-stability of birationally superrigid Fano varieties}, Compos. Math. {\bf 155} (2019), no.~9, 1845--1852. 
\bibitem{S}Suzuki, F.: {\it Birational rigidity of complete intersections}, Math. Z. {\bf 285} (2017), no.~1-2, 479--492. 
\bibitem{T}Tian, G.: {\it K-stability and K\"{a}hler-Einstein metrics}, Comm. Pure Appl. Math. {\bf 68} (2015), no.~7, 1085--1156.
\bibitem{Za}Zak, F. L.: {\it Tangents and secants of algebraic varieties}, translated from the Russian manuscript by the author, Translations of Mathematical Monographs, 127, American Mathematical Society, Providence, RI, 1993
\bibitem{Z}Zhuang, Z.: {\it Birational superrigidity and K-stability of Fano complete intersections of index one} (with an appendix written jointly with Charlie Stibitz), 2018. arXiv:1802.08389.
\end{thebibliography}
\end{document}